\documentclass[12pt]{amsart}
\usepackage{graphicx}
\usepackage{amsfonts,amssymb,amscd,amsmath,enumerate,verbatim,calc}
\usepackage{enumitem}
\usepackage{mathtools}

\newtheorem{Theorem}{Theorem}[section]
\newtheorem{Lemma}[Theorem]{Lemma}

\newtheorem{Corollary}[Theorem]{Corollary}

\newtheorem{Remark}[Theorem]{Remark}

\textwidth=15cm \textheight=22cm \topmargin=0.5cm
\oddsidemargin=0.5cm \evensidemargin=0.5cm \pagestyle{plain}

\begin{document}
\title{Automorphism related parameters of graph associated to a finite vector space}
\author{Hira Benish, Imran Javaid$^*$, M. Murtaza}
\keywords{fixing number, fixing neighborhood, fixed number. \\
\indent 2010 {\it Mathematics Subject Classification.} 05C25\\
\indent $^*$ Corresponding author: imran.javaid@bzu.edu.pk}
\address{Centre for advanced studies in Pure and Applied Mathematics,
Bahauddin Zakariya University Multan, Pakistan\newline Email:
hira\_benish@yahoo.com, imran.javaid@bzu.edu.pk,
mahru830@gmail.com.}


\date{}
\maketitle
\begin{abstract}
In this paper, we discuss automorphism related parameters of a graph
associated to a finite vector space. The fixing neighborhood of a
pair $(u,v)$ of vertices of a graph $G$ is the set of all those
vertices $w$ of $G$, such that the orbits of $u$ and $v$ under the
action of stabilizer of $w$ are not equal. The fixed number of a
graph is the minimum number $k$ such that every subset of vertices
of $G$ of cardinality $k$ is a fixing set of $G$. We study some
properties of automorphisms of a graph associated to finite vector
space and find the fixing neighborhood of pair of vertices of the
graph. We also find the fixed number of the graph. It is shown that,
for every positive integer $N$, there exists a graph $G$ with
$fxd(G)-fix(G)\geq N$, where $fxd(G)$ is the fixed number and
$fix(G)$ is the fixing number of $G$.
\end{abstract}

\section{Preliminaries}
The notion of fixing set of graph has its origin in the idea of
symmetry breaking which was introduced by Albertson and Collins
\cite{Alb}. Erwin and Harary \cite{erw} introduced the fixing number of a graph
$G$. Fixing sets have been studied extensively to
destroy the automorphisms of various graphs
\cite{bou,bou1,cac,gib,haray}.

Fixing number and metric dimension are two closely related
invariants. The two invariants coincides on many families of graph
like path graph, cycle graph etc. The difference of two invariant on
families of graphs is studied in \cite{NBiggs,cac,erw} in terms of
the order of graph. Boutin \cite{bou} studied fixing sets in
connection with distance determining set (she used the name for
resolving sets). Arumugam \emph{et al.} \cite{AM} defined resolving
neighborhood to study the fractional metric dimension of graph. We
defined the fixing neighborhood of a pair of vertices of a graph in
\cite{HB} in order to study all those vertices of the graph whose
fixing can destroy an automorphism that maps the two vertices of the
pair on each other. Fixing neighborhood of a pair of vertices
contains all such vertices of graph that destroy all automorphisms
between those two vertices of graph. Motivated by the definition of
resolving number of a graph, Javaid {\it et al.} \cite{ijav} defined the fixed
number of a graph. The authors found
characterization and realizable results based on fixed number.

A new area of research in graph theory is associating graphs with
various algebraic structures. Beck \cite{Beck} initiated the study
of zero divisor graph of a commutative ring with unity to address
coloring problem. Bondy \emph{et al.} \cite{bates1,bates2} studied
commuting graphs associated to groups. Power graphs for groups and
semigroups were discussed in
\cite{Cameron,Chakrabarty,Moghaddamfar}. In \cite{Rad,Talebi}
intersection graphs were associated to vector spaces. In \cite{Das},
Das assigned a non-zero component graph to finite dimensional vector
spaces. The author studied its domination number and independence
number. In \cite{Das1}, the author discussed edge-connectivity and
chromatic number of the graph. Metric dimension and partition
dimension of non-zero component graph are studied in \cite{us}.
Fazil studied its fixing number in \cite{fa}. Murtaza {\it et al.}
studied locating-dominating sets and identifying codes of non-zero
component graph \cite{Murtaza}. In this paper we study some
properties of automorphisms of non zero component graph as defined
by Das.  We find the fixing neighborhood of pairs of vertices of the
graph and the fixed number of the graph.

Now we define some graph related terminology which is used in the
article: Let $G$ be a graph with the vertex set $V(G)$ and the edge
set $E(G)$. Two vertices $u$ and $v$ are \emph{adjacent}, if they
share an edge, otherwise they are called \emph{non-adjacent}. The
number of adjacent vertices of $v$ is called the degree of $v$ in
$G$. For a graph $G$, an {\it automorphism} of $G$ is a bijective
mapping $f$ on $V(G)$ such that $f(u)f(v)\in E(G)$ if and only if
$uv\in E(G)$. The set of all automorphisms of $G$ forms a group,
denoted by $\Gamma(G)$, under the operation of composition. For a
vertex $v$ of $G$, the set $\{f(v):f\in \Gamma(G)\}$ is the {\it
orbit} of $v$, denoted by $\mathcal{O}(v)$. If two vertices $u,v$
are mapped on each other under the action of an automorphism $g\in
\Gamma(G)$, then we write it $u\sim^gv$. If $u,v$ cannot be mapped on
each other, then we write $u\nsim^gv$. An automorphism $g\in
\Gamma(G)$ is said to \emph{fix} a vertex $v\in V(G)$ if $v\sim^g
v$. The {\it stabilizer} of a vertex $v$ is the set of all
automorphisms that fix $v$ and it is denoted by $\Gamma_v(G)$. Also,
$\Gamma_v(G)$ is a subgroup of $\Gamma(G)$. Let us consider sets
$S(G)=\{v\in V(G):$ $|\mathcal{O}(v)|\ge 2 \}$ and
$V_s(G)=\{(u,v)\in S(G)\times S(G):$ $u\neq v$ and
$\mathcal{O}(u)=\mathcal{O}(v)\}$. If $G$ is a rigid graph (i.e., a
graph with $\Gamma (G)={id}$), then $V_s(G)=\emptyset$. For $v\in
V(G)$, the subgroup $\Gamma_v(G)$ has a natural action on $V(G)$ and
the orbit of $u$ under this action is denoted by $\mathcal{O}_v(u)$
i.e., $\mathcal{O}_v(u)=\{g(u):g\in\Gamma_v(G)\}$. An automorphism
$g\in \Gamma(G)$ is said to {\it fix} a set $D\subseteq V(G)$ if for
all $v\in D$, $v\sim^g v$. The set of automorphisms that fix $D$,
denoted by $\Gamma_D(G)$, is a subgroup of $\Gamma(G)$ and
$\Gamma_D(G)=\cap_{v\in D}\Gamma_v(G)$. If $D$ is a set of vertices
for which $\Gamma_D(G)=\{id\}$, then we say that $D$ is a {\it
fixing set} of $G$. The \emph{fixing number} of a graph $G$ is
defined as the minimum cardinality of a fixing set, denoted by
$fix(G)$.

Throughout the paper, $\mathbb{V}$ denotes a vector space of
dimension $n$ over the field of $q$ elements and
$\{b_1,b_2,...,b_n\}$ be a basis of $\mathbb{V}$. The \emph{non-zero
component graph} of $\mathbb{V}$ \cite{Das}, denoted by $G(\mathbb{V})$, is a
graph whose vertex set consists of the non-zero vectors of
$\mathbb{V}$ and two vertices are joined by an edge if they share at
least one $b_{i}$ with non-zero coefficient in their unique linear
combination with respect to $\{b_{1}, b_{2},\ldots,b_{n}\}$. It is proved in \cite{Das} that $G(\mathbb{V})$ is
independent of the choice of basis, i.e., isomorphic non-zero
component graphs are obtained for two different bases. In
\cite{Das}, Das studied automorphisms of $G(\mathbb{V})$. It is
shown that an automorphism maps basis of $G(\mathbb{V})$ to a basis
of a special type, namely non-zero scalar multiples of a permutation
of basis vectors.
\begin{Theorem}\cite{Das}
Let $\varphi:G(\mathbb{V})\rightarrow G(\mathbb{V})$ be a graph
automorphism. Then, $\varphi$ maps a basis $\{\alpha_1,
\alpha_2,...,\alpha_n\}$ of $\mathbb{V}$ to another basis $\{\beta_1,
\beta_2,...,\beta_n\}$ such that there exists a permutation $\sigma$
from the symmetric group on $n$ elements, where each $\beta_i$ is of the
form $c_i\alpha_{\sigma(i)}$ and each $c_i's$ are non-zero.
\end{Theorem}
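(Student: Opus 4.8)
The plan is to prove that every graph automorphism $\varphi$ of $G(\mathbb{V})$ sends a basis to a basis of the stated special type. The key structural fact I would isolate first is a graph-theoretic characterization of bases of $\mathbb{V}$ among the vertices of $G(\mathbb{V})$: a set of vectors forms a basis if and only if it is, in a suitable sense, an \emph{independent} collection with respect to the adjacency structure. Concretely, two vectors are adjacent precisely when their support sets (the indices $i$ with nonzero $b_i$-coefficient) intersect, so the combinatorial data carried by $G(\mathbb{V})$ is exactly the support of each vector. I would first show that the basis vectors $b_1,\dots,b_n$ are characterized as those vertices having \emph{singleton} support, and that these can be detected purely graph-theoretically — for instance, a nonzero vector has singleton support if and only if it is adjacent to the maximum possible number of other vertices, or equivalently its closed neighborhood is maximal among vertices, so that the property ``has singleton support'' is invariant under any automorphism.

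Next I would translate adjacency into the language of supports. Writing $\mathrm{supp}(v)\subseteq\{1,\dots,n\}$ for the support of $v$, adjacency $uv\in E$ holds iff $\mathrm{supp}(u)\cap\mathrm{supp}(v)\neq\emptyset$. Given the automorphism $\varphi$ and a basis $\{\alpha_1,\dots,\alpha_n\}$, I would argue that $\varphi$ must permute the set of singleton-support vertices among themselves, and hence induces a well-defined permutation $\sigma$ on the index set $\{1,\dots,n\}$ recording how the directions (one-dimensional subspaces spanned by basis vectors) are shuffled. The image $\beta_i := \varphi(\alpha_i)$ then should again be a vector whose role in the support structure matches that of a single direction $\sigma(i)$, which forces $\beta_i$ to be a nonzero scalar multiple $c_i\alpha_{\sigma(i)}$ of a basis vector.

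The crux of the argument — and the step I expect to be the main obstacle — is showing that the scalar multiples are genuinely \emph{arbitrary nonzero scalars} and that no further constraint is forced, i.e.\ that adjacency does not see the actual value of a nonzero coefficient, only whether it is zero or nonzero. This is what makes the support the complete automorphism-invariant data: for any vector $v=\sum c_i b_i$ and any choice of nonzero scalars $d_i$, the vector $\sum d_i c_i b_i$ has the same support and hence the same neighborhood in $G(\mathbb{V})$. I would make this precise by verifying that a vertex $v$ and the result of independently rescaling its nonzero coordinates are indistinguishable in the graph, so that the map $c_i\alpha_{\sigma(i)}$ with arbitrary nonzero $c_i$ does yield a genuine automorphism and, conversely, every automorphism is of this form.

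The remaining verification is routine: one checks that the assignment $\alpha_i\mapsto c_i\alpha_{\sigma(i)}$ extends linearly to a bijective linear map of $\mathbb{V}$ that preserves supports, hence induces a graph automorphism, and that composing with $\varphi^{-1}$ reduces to the identity on the singleton-support vertices, pinning down $\sigma$ and the $c_i$ uniquely. I would organize the proof so that the support characterization of adjacency is stated and proved as a preliminary observation, after which the permutation $\sigma$ and scalars $c_i$ fall out almost formally, leaving only the check that $\{\beta_1,\dots,\beta_n\}$ is again a basis — which is immediate since applying a bijective linear map to a basis yields a basis.
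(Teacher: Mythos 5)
The paper you are working from never proves this statement: it appears as Theorem 1.1, quoted from \cite{Das}, so the only comparison available is with the standard argument of Das, which is essentially the one you outline --- identify the scalar multiples of the defining basis vectors by a purely graph-theoretic property preserved by automorphisms, then use preservation of adjacency and non-adjacency to extract the permutation $\sigma$ and the scalars $c_i$. Your overall architecture (adjacency is exactly ``supports intersect,'' so the support is the complete combinatorial datum; automorphisms must therefore respect the support structure) is correct, as are the closing steps: $\sigma$ is injective because distinct $\alpha_i,\alpha_j$ are non-adjacent and hence so are their images, and the image set is a basis because it consists of nonzero multiples of $n$ distinct basis vectors. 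Note also that the theorem must be read as being about the basis \emph{with respect to which the graph is defined} (for an arbitrary basis it is false), and your proof correctly treats it that way.

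However, your key invariance step is stated backwards, and as written it fails. You characterize singleton-support vertices as those ``adjacent to the maximum possible number of other vertices,'' equivalently with maximal closed neighborhood. The opposite is true: a vertex $v$ with $|S_v|=s$ is non-adjacent to exactly the $q^{n-s}-1$ nonzero vectors supported off $S_v$, so $\deg(v)=q^{n}-q^{n-s}-1$ is strictly \emph{increasing} in $s$ (for $q=2$ this is the formula $\deg(v)=(2^{s}-1)2^{n-s}-1$ recorded in Section 2 of the paper, quoted from \cite{Murtaza}). Hence the basis-vector multiples are the vertices of \emph{minimum} degree, their closed neighborhoods are \emph{minimal} under inclusion (e.g.\ $N[b_i]\subsetneq N[b_i+b_j]$), and the vertices adjacent to everything are the full-support ones, such as $\sum_i b_i$. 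Run literally, your argument would prove that $\varphi$ preserves the set of full-support vertices, which says nothing about bases. The repair is immediate --- replace ``maximum'' by ``minimum'' and invoke the strict monotonicity of the degree formula --- after which your proof coincides with the standard one. Two smaller points: for $q>2$ you should justify why $\sigma$ is well defined on indices, i.e.\ why multiples of the same $\alpha_j$ (which are pairwise adjacent twins) cannot be sent to multiples of distinct basis vectors (which are non-adjacent); and your final paragraph, verifying conversely that support-preserving rescalings are automorphisms, is not needed for this statement.
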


The \emph{skeleton} of a vertex $u\in V(G(\mathbb{V}))$ denoted by
$S_u$, is the set of all those basis vectors of $\mathbb{V}$ which
have non-zero coefficients in the representation of $u$ as the
linear combination of basis vectors. In \cite{Murtaza}, we partition
the vertex set of $G(\mathbb{V})$ into $n$ classes $T_i$, $(1\le i
\le n)$, where $T_i=\{v\in \mathbb{V}: |S_v|=i\}$. For example, if
$n=4$ and $q=2$, then $T_3=\{b_1+b_2+b_3,b_1+b_2+b_4,b_1+b_3+b_4,b_2+b_3+b_4\}$.

The section wise break up of the article is as follows: In Section
2, we study the properties of automorphisms of non-zero component
graph. We discuss the relation between vertices and their images in
terms of their skeletons. In Section 3, we discuss some
properties and the cardinality of fixing neighborhood of pair of
vertices of non-zero component graph. The last section is devoted to
the study of the fixed number of graphs. We find the fixed number of
non-zero component graph. We give a realizable result about the
existence of a graph $G$, for every positive integer $N$, such that
$fxd(G)-fix(G)\ge N$, where $fxd(G)$ is the fixed number of graph.
\section{Automorphisms of non-zero component graph}
In this section, $\mathbb{V}$ is a vector space of dimension $n\geq
3$ over the field of $2$ element and $G(\mathbb{V})$ is the
corresponding non-zero component graph.


\begin{Lemma}\label{Murtaza}\cite{Murtaza}
If $v\in T_s$ for $s$ $(1\le s \le n)$, then
$deg(v)=(2^s-1)2^{n-s}-1$.
\end{Lemma}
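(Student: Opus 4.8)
The plan is to compute the degree of $v$ by complementary counting on skeletons. First I would restate the adjacency condition purely in terms of skeletons: by the definition of $G(\mathbb{V})$, a vertex $w$ is adjacent to $v$ precisely when $S_w \cap S_v \neq \emptyset$, that is, when $w$ and $v$ share at least one basis vector with nonzero coefficient. Hence $\deg(v)$ equals the number of nonzero vectors $w \neq v$ whose skeleton meets $S_v$.

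Rather than enumerate these directly, I would count the complement. A nonzero vector $w$ fails to be adjacent to $v$ exactly when $S_w \cap S_v = \emptyset$, i.e., when $w$ lies in the subspace spanned by the $n-s$ basis vectors outside $S_v$. Over the field of two elements this subspace has $2^{n-s}$ elements, so it contains $2^{n-s}-1$ nonzero vectors. Since each such $w$ has skeleton disjoint from the nonempty set $S_v$, none of them equals $v$; thus there are exactly $2^{n-s}-1$ nonzero vectors, all distinct from $v$, that are non-adjacent to $v$.

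Subtracting from the total then finishes the count. The number of vertices other than $v$ is $(2^n-1)-1 = 2^n-2$, so
\[
\deg(v) = (2^n - 2) - (2^{n-s} - 1) = 2^n - 2^{n-s} - 1.
\]
I would then rewrite this in the asserted form by factoring $2^n - 2^{n-s} = (2^s-1)2^{n-s}$, which yields $\deg(v) = (2^s-1)2^{n-s} - 1$, as claimed.

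This argument is essentially routine; the only point demanding care is the bookkeeping in the complementary count, namely verifying that the zero vector is correctly omitted and that $v$ is excluded from both the adjacent and the non-adjacent tallies. The hypothesis $s \geq 1$ guarantees $S_v \neq \emptyset$, so the set of vectors with skeleton contained in the complement of $S_v$ automatically excludes $v$, and no overcounting occurs.
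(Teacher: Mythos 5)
Your proof is correct. The adjacency criterion ($w$ adjacent to $v$ iff $S_w\cap S_v\neq\emptyset$), the identification of the non-neighbours of $v$ with the nonzero vectors of the span of the $n-s$ basis vectors outside $S_v$, and the final arithmetic $(2^n-2)-(2^{n-s}-1)=2^n-2^{n-s}-1=(2^s-1)2^{n-s}-1$ are all valid, and you correctly handle the two bookkeeping points (excluding the zero vector and excluding $v$ itself, the latter guaranteed by $S_v\neq\emptyset$). Note that the paper itself offers no proof to compare against: this lemma is quoted from the reference \cite{Murtaza} and used as a black box, so your complementary-counting argument serves as a self-contained justification of exactly the kind the cited source would contain. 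The only stylistic remark is that your phrase ``the subspace spanned by the $n-s$ basis vectors outside $S_v$'' is doing the real work; it is worth stating explicitly that over the field of two elements a vector is determined by its skeleton, so ``skeleton disjoint from $S_v$'' is literally equivalent to ``lies in that span,'' which is what licenses the count of $2^{n-s}-1$.
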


\begin{Lemma}\label{VertexDeg}
Let $u,v\in V(G(\mathbb{V}))$ such that $u\in T_r$ and $v\in T_s$
where $r\ne s$ and $1\le r,s \le n$, then $u\not\sim^g v$ for all
$g\in \Gamma(G(\mathbb{V}))$.
\end{Lemma}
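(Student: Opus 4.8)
The plan is to exploit the fact that every graph automorphism preserves vertex degrees: if $g\in\Gamma(G(\mathbb{V}))$ and $g(u)=v$, then necessarily $\deg(u)=\deg(v)$. Hence, to show that no automorphism can carry $u$ onto $v$, it suffices to prove that vertices belonging to distinct classes $T_r$ and $T_s$ have distinct degrees. This reduces the whole statement to a purely numerical comparison supplied by Lemma \ref{Murtaza}.

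First I would invoke Lemma \ref{Murtaza} to record the degrees explicitly: for $u\in T_r$ we have $\deg(u)=(2^r-1)2^{n-r}-1$, and for $v\in T_s$ we have $\deg(v)=(2^s-1)2^{n-s}-1$. The key simplification is that $(2^s-1)2^{n-s}-1=2^n-2^{n-s}-1$, so the degree of a vertex in $T_s$ depends on $s$ only through the single term $-2^{n-s}$. I would then observe that the function $s\mapsto 2^n-2^{n-s}-1$ is strictly increasing on $\{1,2,\ldots,n\}$, since as $s$ increases the quantity $2^{n-s}$ strictly decreases and therefore $-2^{n-s}$ strictly increases. In particular this function is injective in $s$, so from $r\neq s$ we conclude $\deg(u)\neq\deg(v)$.

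Combining the two observations finishes the argument: degrees are invariant under automorphisms, yet $\deg(u)\neq\deg(v)$, so no $g\in\Gamma(G(\mathbb{V}))$ can map $u$ onto $v$, i.e.\ $u\not\sim^g v$ for every $g$. I do not expect any genuine obstacle here; the only points needing care are the algebraic simplification of the degree formula and the monotonicity check, both elementary. The entire proof rests on the degree-preservation property of automorphisms together with the explicit degree computation of Lemma \ref{Murtaza}.
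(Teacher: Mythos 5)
Your proof is correct and follows essentially the same route as the paper: invoke Lemma \ref{Murtaza} for the degree formula, conclude $\deg(u)\neq\deg(v)$, and use degree-preservation under automorphisms. In fact you supply a detail the paper omits — the simplification $(2^s-1)2^{n-s}-1=2^n-2^{n-s}-1$ and the monotonicity argument showing the degree is injective in $s$ — where the paper merely asserts the inequality.
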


\begin{proof}
Since $r\ne s$ $\Rightarrow$ $(2^r-1)2^{n-r}-1\ne (2^s-1)2^{n-s}-1$.
Thus $deg(u)\ne deg(v)$ by Lemma \ref{Murtaza} and hence
$u\not\sim^g v$.
\end{proof}
From Lemma \ref{VertexDeg}, we have the following straightforward
remarks.
\begin{Remark}
Since $\sum\limits_{i=1}^n b_i \in T_n$ is the only element in
$T_n$. Therefore, by Lemma \ref{VertexDeg}, $g(\sum\limits_{i=1}^n
b_i)=\sum\limits_{i=1}^n b_i$ for all $g\in \Gamma(G(\mathbb{V}))$.
\end{Remark}
\begin{Remark}
Let $u,v\in V(G(\mathbb{V}))$, then $(u,v)\in S(G(\mathbb{V}))\times
S(G(\mathbb{V})) $ if and only if both $u,v\in T_i$ for some $i$, $1\leq
i\leq n-1$.
\end{Remark}
\begin{Lemma}
Let $b_l\in T_1$ be a basis vector and $g\in \Gamma_{b_l}$. Let
$u\in V(G(\mathbb{V}))$, then $b_l\in S_u$ if and only if $b_l\in
S_{g(u)}$.
\end{Lemma}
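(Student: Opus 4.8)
The plan is to sidestep the structural theorem of Das entirely and instead reduce the skeleton-membership condition to a statement about \emph{adjacency}, which any graph automorphism preserves by definition. The key observation I would record first comes straight from the adjacency rule of $G(\mathbb{V})$: two distinct vertices $x,y$ satisfy $xy\in E(G(\mathbb{V}))$ exactly when $S_x\cap S_y\neq\emptyset$. Specializing $y=b_l$ and using that $S_{b_l}=\{b_l\}$ (since $b_l\in T_1$ forces $|S_{b_l}|=1$), I obtain that for every vertex $u\neq b_l$ the condition $b_l\in S_u$ is equivalent to $u$ being adjacent to $b_l$. This single translation is what turns the lemma into a triviality about automorphisms.

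Next I would split into two cases, isolating the degenerate one because the adjacency translation fails there (a vertex is never adjacent to itself, yet $b_l\in S_{b_l}$). If $u=b_l$, then $g\in\Gamma_{b_l}$ fixes $b_l$, so $g(u)=g(b_l)=b_l$; hence $b_l\in S_u=S_{b_l}$ and $b_l\in S_{g(u)}=S_{b_l}$ both hold, and the equivalence is immediate.

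For the main case $u\neq b_l$, I would simply chain equivalences. Since $g$ is a bijection fixing $b_l$, we have $g(u)\neq b_l$, so the adjacency characterization is valid for both $u$ and $g(u)$, and
\begin{align*}
b_l\in S_u &\iff u,b_l \text{ are adjacent}
\iff g(u),g(b_l)\text{ are adjacent}\\
&\iff g(u),b_l\text{ are adjacent}
\iff b_l\in S_{g(u)},
\end{align*}
where the second equivalence is precisely the edge-preserving property of the automorphism $g$ and the third uses $g(b_l)=b_l$. This closes the argument.

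I do not anticipate a real obstacle: the entire content is the recognition that ``$b_l$ lies in the skeleton of $u$'' is the same as ``$u$ is a neighbor of the basis vertex $b_l$,'' after which the claim is just that automorphisms respect adjacency together with $g(b_l)=b_l$. The only point demanding care is the degenerate case $u=b_l$, which I handle separately so as not to misapply the adjacency equivalence to a would-be self-loop.
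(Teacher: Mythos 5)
Your proposal is correct and follows essentially the same route as the paper's own proof: translate ``$b_l\in S_u$'' into ``$u$ is adjacent to $b_l$'' and then invoke the fact that the automorphism $g$ preserves adjacency together with $g(b_l)=b_l$. The only difference is cosmetic --- you chain equivalences where the paper argues by contradiction in each direction --- and your explicit treatment of the degenerate case $u=b_l$ is a small point of care that the paper's proof silently skips.
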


\begin{proof}\label{basisfixed}
Let $b_l\in S_u$, then $u$ is adjacent to $b_l$. Suppose on contrary
$b_l\not\in S_{g(u)}$, then $g(u)$ is not adjacent to $b_l=g(b_l)$,
a contradiction. Conversely, let $b_l\in S_{g(u)}$, then $b_l$ is
adjacent to $g(u)$. Suppose on contrary $b_l\not \in S_u$, then $u$
is not adjacent to $b_l=g(b_l)$, a contradiction.
\end{proof}

\begin{Lemma}\label{LemTransOnlyNghbr}
Let $b_l,b_m\in T_1$ be two distinct basis vectors of $\mathbb{V}$
and $g\in \Gamma(G(\mathbb{V}))$ be an automorphism such that
$b_l\sim^g b_m$. Let $u\in V(G(\mathbb{V}))$, then we have:
\begin{enumerate}[label=(\roman*)]
  \item If $b_l\in S_u$ and $b_m\not\in S_u$, then
$b_l\not\in S_{g(u)}$ and $b_m\in S_{g(u)}$.
  \item $b_l,b_m\in S_u$ if and only if
$b_l,b_m \in S_{g(u)}$.
\end{enumerate}
\end{Lemma}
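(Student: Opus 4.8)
The plan is to reduce both parts to a single statement describing how $g$ relates the skeleton of a vertex to the skeleton of its image, and then to read off (i) and (ii) by tracking the two distinguished basis vectors $b_l,b_m$. First I would unpack the hypothesis $b_l\sim^g b_m$: since $b_l$ and $b_m$ are mapped onto each other, $g(b_l)=b_m$ and $g(b_m)=b_l$. (By Lemma \ref{VertexDeg} an automorphism cannot move a vertex of $T_1$ out of $T_1$, so $g(b_l),g(b_m)$ are again basis vectors, which is consistent with this; but the argument only uses the two displayed equalities.)

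Second, I would record the elementary dictionary between skeletons and adjacency that already drives the preceding lemma: for a basis vector $b_i$ and any vertex $w$, one has $b_i\in S_w$ if and only if $w$ is adjacent to $b_i$ \emph{or} $w=b_i$ (over $\mathbb{F}_2$ the skeleton is the support, and $b_i$ shares a basis vector with $w$ exactly when $i$ lies in the support of $w$). Because $g$ is an automorphism it preserves both adjacency and equality of vertices, so applying $g$ to this equivalence yields the key relation $b_i\in S_w \iff g(b_i)\in S_{g(w)}$, valid for every basis vector $b_i$ and every vertex $w$. This is the crux, and it is precisely the generalization of the previous lemma (the special case $g(b_i)=b_i$) from a fixed basis vector to an arbitrary one.

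Third, I would specialize $w=u$ and substitute $g(b_l)=b_m$ and $g(b_m)=b_l$. For part (i), the hypothesis $b_l\in S_u$ gives $b_m=g(b_l)\in S_{g(u)}$, while $b_m\notin S_u$ gives $b_l=g(b_m)\notin S_{g(u)}$, which is exactly the asserted conclusion. For the forward direction of part (ii), $b_l,b_m\in S_u$ forces $b_m=g(b_l)\in S_{g(u)}$ and $b_l=g(b_m)\in S_{g(u)}$ simultaneously; for the converse I would apply the same relation to $g^{-1}$, which also interchanges $b_l$ and $b_m$, together with $S_{g^{-1}(g(u))}=S_u$.

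I expect the only point requiring genuine care to be the boundary case of the skeleton–adjacency dictionary when $w=b_i$, where $w$ is not adjacent to $b_i$ yet still $b_i\in S_w$; phrasing the equivalence as ``$w$ adjacent to $b_i$ \emph{or} $w=b_i$'' absorbs this cleanly, and in part (i) one may alternatively observe that if $u=b_l$ then $g(u)=b_m$ has skeleton $\{m\}$, making the conclusion immediate. Beyond this bookkeeping everything is a direct substitution, so I would not anticipate a real obstacle.
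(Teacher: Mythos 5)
Your proof is correct, and it is organized genuinely differently from the paper's. The paper proves each part by a four-way case analysis on whether $b_l$ and/or $b_m$ lie in $S_{g(u)}$, eliminating three of the four cases by contradiction using the fact that $g$ carries the adjacency/non-adjacency of $u$ with $b_l,b_m$ to adjacency/non-adjacency of $g(u)$ with $g(b_l)=b_m$, $g(b_m)=b_l$; the converse of (ii) is then dispatched with ``similar arguments.'' You instead isolate a single transport relation, $b_i\in S_w \Leftrightarrow g(b_i)\in S_{g(w)}$, and obtain part (i), part (ii), and the converse of (ii) by pure substitution of $g(b_l)=b_m$ and $g(b_m)=b_l$ (the converse following either from the biconditional itself or from your $g^{-1}$ remark). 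Both arguments rest on the same underlying facts --- automorphisms preserve adjacency, and membership of a basis vector in a skeleton is detected by adjacency --- so neither is deeper than the other, but your packaging buys two things: it replaces eight cases (four per part) with one equivalence, and, more substantively, it repairs a boundary issue the paper glosses over. The paper's implicit dictionary ``$b_l\in S_u$ iff $u$ is adjacent to $b_l$'' fails when $u=b_l$ (a vertex is not adjacent to itself, yet $b_l\in S_{b_l}$), so its case analysis is not literally valid for $u\in\{b_l,b_m\}$; your ``adjacent \emph{or} equal'' formulation, together with the explicit observation that $u=b_l$ forces $g(u)=b_m$, absorbs exactly this degenerate case. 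Your parenthetical appeal to Lemma \ref{VertexDeg} to keep images of basis vectors in $T_1$ is needed only for the general form of your transport relation; in the application it is superfluous since $g(b_l)$ and $g(b_m)$ are given to be $b_m$ and $b_l$.
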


\begin{proof}
$(i)$ If $b_l\in S_u$ and $b_m\not\in S_u$, then $u$ is adjacent to
$b_l$ and non-adjacent to $b_m$. We discuss four possible cases:
\begin{enumerate}
        \item If both $b_l,b_m\not\in S_{g(u)}$, then
        $g(u)$ is non-adjacent $b_m=g(b_l)$, a contradiction.
        \item If both $b_l,b_m\in S_{g(u)}$, then
        $g(u)$ is adjacent to $b_l=g(b_m)$, a contradiction.
        \item If $b_l\in S_{g(u)}$ and $b_m\not\in S_{g(u)}$, then
         $g(u)$ is adjacent to $b_l=g(b_m)$, a contradiction.
        \item If $b_l\not\in S_{g(u)}$ and $b_m\in S_{g(u)}$, then $g(u)$
         is adjacent to $b_m=g(b_l)$ and non-adjacent
        to $b_l=g(b_m)$.
\end{enumerate}
As in Case (4), $g$ is preserving the relation of adjacency
and non-adjacency, hence proved.\\
$(ii)$ Let $b_l,b_m\in S_u$, then $u$ is adjacent to both $b_l$ and
$b_m$. We discuss four possible cases:
    \begin{enumerate}
        \item If both $b_l,b_m\not\in S_{g(u)}$, then
        $g(u)$ is non-adjacent $b_m=g(b_l)$, a contradiction.
        \item If both $b_l,b_m\in S_{g(u)}$, then
        $g(u)$ is adjacent to $b_l=g(b_m)$ and $b_m=g(b_l)$.
        \item If $b_l\in S_{g(u)}$ and $b_m\not\in S_{g(u)}$, then
         $g(u)$ is non-adjacent to $b_m=g(b_l)$, a contradiction.
        \item If $b_l\not\in S_{g(u)}$ and $b_m\in S_{g(u)}$, then $g(u)$
         is non-adjacent to $b_l=g(b_m)$, a contradiction.
    \end{enumerate}
 Since, $g$ preserves the relation of adjacency in Case (2), therefore $b_l,b_m\in S_{g(u)}$.
Converse part can be proved by the similar arguments.
\end{proof}

\begin{Lemma}\label{skelotondiff}
Let $u,v\in T_i$ for some $i$ $(1\le i \le n-1)$ and $u\sim^g v$ for
some $g\in \Gamma(G(\mathbb{V}))$. The following statements hold:
\begin{enumerate}[label=(\roman*)]
  \item If $b\in S_u \cap S_v$, then $g(b)\in S_u \cap S_v$.
  \item If $b\in S_u-S_v$, then $g(b)\in S_v-S_u$.
\end{enumerate}
\end{Lemma}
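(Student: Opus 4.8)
The plan is to reduce both statements to a single structural fact: the automorphism $g$ carries skeletons to skeletons. Reading $u\sim^g v$ as the assertion that $g$ interchanges $u$ and $v$, i.e. $g(u)=v$ and $g(v)=u$, I would first record that $g$ permutes the basis vectors: by the automorphism theorem of Das each $b_i$ is sent to a non-zero scalar multiple of a permuted basis vector, and over the field of two elements the only non-zero scalar is $1$, so $g$ restricts to a permutation of $T_1=\{b_1,\dots,b_n\}$. Viewing $g$ as this permutation of basis vectors, the goal becomes the clean identity $S_{g(w)}=g(S_w)$ for every vertex $w$; granting this, the lemma drops out by applying $g$ to set relations.

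To prove $S_{g(w)}=g(S_w)$ I would use the characterization that for a basis vector $b$ and a vertex $w$ with $w\neq b$, one has $b\in S_w$ exactly when $w$ is adjacent to $b$ (since $S_b=\{b\}$, the two share a common basis vector iff $b\in S_w$). Because $g$ preserves adjacency and non-adjacency and is injective, $w$ adjacent to $b$ is equivalent to $g(w)$ adjacent to $g(b)$, hence $b\in S_w \iff g(b)\in S_{g(w)}$ for every basis vector $b\neq w$. The one case to treat separately is when $w$ itself is a basis vector: then $S_w=\{w\}$ and, since $g(w)\in T_1$ again, $S_{g(w)}=\{g(w)\}=g(S_w)$, so the identity persists. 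As $g$ is a bijection on basis vectors, these equivalences upgrade to the set equality $g(S_w)=S_{g(w)}$.

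With the identity in hand I would finish quickly. Applying it to $u$ and $v$ and using $g(u)=v$, $g(v)=u$ gives $g(S_u)=S_v$ and $g(S_v)=S_u$. Since $g$ is injective it commutes with intersection and set difference, so $g(S_u\cap S_v)=g(S_u)\cap g(S_v)=S_v\cap S_u=S_u\cap S_v$, which is exactly (i), and $g(S_u\setminus S_v)=g(S_u)\setminus g(S_v)=S_v\setminus S_u$, which is exactly (ii). Specializing these set identities to a single element $b$ yields the stated conclusions $g(b)\in S_u\cap S_v$ and $g(b)\in S_v\setminus S_u$.

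I expect the genuine content to lie in two places. First, the hypothesis must be read as $g$ interchanging $u$ and $v$, not merely $g(u)=v$: the two-sidedness is essential, since with only $g(u)=v$ part (i) is false---for $n=3$, taking $u=b_1+b_2$, $v=b_2+b_3$ and the cyclic automorphism $b_1\mapsto b_2\mapsto b_3\mapsto b_1$ sends the common basis vector $b_2$ to $b_3\notin S_u\cap S_v$. Second, care is needed in establishing $S_{g(w)}=g(S_w)$ precisely around the basis-vector case, where the adjacency characterization $b\in S_w\iff w\sim b$ fails for $b=w$; once that case is absorbed as above, the remainder is routine.
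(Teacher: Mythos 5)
Your proof is correct, and it takes a somewhat different route from the paper's. The paper argues directly on the given pair: for (i) it notes $b$ is adjacent to both $u$ and $v$ and derives a contradiction from $g(b)\notin S_u\cap S_v$ using $g(u)=v$ and $g(v)=u$ (the printed proof writes $g(u)$ in places where $g(b)$ is clearly intended); for (ii) it observes that $g(b)$ must be adjacent to $g(u)=v$ and non-adjacent to $g(v)=u$. You instead factor out a general equivariance identity, $g(S_w)=S_{g(w)}$ for every vertex $w$, proved via Das's theorem (over $GF(2)$ the scalars are trivial, so $g$ permutes $T_1$) together with the characterization of skeleton membership by adjacency, and then obtain both parts by pure set algebra from $g(S_u)=S_v$, $g(S_v)=S_u$. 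Your packaging buys two things the paper leaves implicit: it justifies that $g(b)$ is again a basis vector, without which the conclusion ``$g(b)\in S_v-S_u$'' is not even well-formed (the paper never invokes this, though it is needed), and it handles the degenerate cases ($b=u$ when $i=1$) cleanly inside the identity. Your observation that the two-sided reading $g(u)=v$ \emph{and} $g(v)=u$ is essential --- with the explicit $n=3$ cyclic counterexample for the one-sided reading --- is also a genuine point of care: the paper's proof silently uses both equalities, exactly as you predicted it must. The paper's inline argument is shorter; yours is more reusable and more careful at the edges.
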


\begin{proof}
$(i)$ Since $b\in S_u \cap S_v$, therefore $b$ is adjacent to both
$u$ and $v$. Suppose on contrary $g(u)\not \in S_u \cap S_v$, then
$g(u)$ is non-adjacent to either $u=g(v)$ or $v=g(u)$, a
contradiction.\\
$(ii)$ Let $b\in S_u-S_v$, then $b$ is adjacent to $u$ and non-adjacent
 to $v$. Since $g$ is an automorphism, therefore $g(b)$ must
be adjacent to $g(u)=v$ and non-adjacent to $g(v)=u$. Hence,
$g(b)\in S_v-S_u$.
\end{proof}

\begin{Lemma}\label{stabskel}
Let $u\in V(G(\mathbb{V}))$ and $b\in S_u$ (or $b\notin S_u$). If
$g\in \Gamma_u$, then $g(b)\in S_u$ (or $g(b)\notin S_u$).
\end{Lemma}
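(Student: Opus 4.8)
The plan is to reduce the statement to two facts that are already available: that an automorphism preserves adjacency and non-adjacency, and that for a \emph{basis} vector the skeleton is a singleton, so that membership of a basis vector in a skeleton is essentially the same thing as adjacency to it. First I would record the one structural input I need: since $g$ preserves degree (Lemma \ref{Murtaza}), Lemma \ref{VertexDeg} tells us that $g$ maps each class $T_i$ into itself; in particular $g(b)\in T_1$ whenever $b\in T_1$, and because $q=2$ the class $T_1$ is exactly the set of basis vectors. Hence $g$ sends the basis vector $b$ to another basis vector $g(b)$, and I note that $S_{g(b)}=\{g(b)\}$.

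Next I would handle the assertion ``$b\in S_u\Rightarrow g(b)\in S_u$'' by splitting on whether $u=b$. If $u=b$, then $g\in\Gamma_u$ gives $g(b)=g(u)=u=b\in S_u$ immediately. If $u\neq b$, then $b\in S_u$ means $u$ and $b$ share the basis vector $b$, so $u\sim b$; applying the automorphism yields $u=g(u)\sim g(b)$. Since $g(b)$ is a basis vector adjacent to $u$, the intersection $S_u\cap S_{g(b)}\subseteq\{g(b)\}$ is nonempty, which forces $g(b)\in S_u$.

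For the parallel assertion ``$b\notin S_u\Rightarrow g(b)\notin S_u$'' I would simply apply the first half to $g^{-1}$, observing that $g\in\Gamma_u$ implies $g^{-1}\in\Gamma_u$: if $g(b)$ were in $S_u$, then setting $b'=g(b)$ and using the already-proved implication for $g^{-1}$ would give $g^{-1}(b')=b\in S_u$, a contradiction. (Alternatively one can argue directly, noting that here $b\neq u$, so $b\notin S_u$ is equivalent to $u\not\sim b$, and preservation of non-adjacency gives $u\not\sim g(b)$ with $g(b)$ a basis vector distinct from $u$, whence $g(b)\notin S_u$.)

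The only genuinely delicate point is the translation step where a basis vector adjacent to $u$ is shown to actually lie in $S_u$, rather than merely sharing some other coordinate with $u$; but this is immediate precisely because the skeleton of a basis vector is a single element, so I do not expect a real obstacle. The substantive work is entirely front-loaded into establishing that $g$ keeps $b$ inside $T_1$, and once that class-preservation is in hand the remainder is a routine back-and-forth between adjacency and skeleton membership, mirroring the style of Lemma \ref{skelotondiff}.
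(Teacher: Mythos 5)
Your proof is correct and follows essentially the same route as the paper's: membership of a basis vector in $S_u$ is equivalent to adjacency with $u$, and an automorphism fixing $u$ preserves adjacency and non-adjacency to $g(u)=u$. The paper's one-line proof leaves implicit exactly the point you front-load — that $g(b)$ is again a basis vector (class preservation via degrees), without which adjacency to $u$ would not translate back into membership in $S_u$ — so your version is a more careful rendering of the same argument, not a different one.
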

\begin{proof}
Proof follows from the fact that $b$ and $u$ are
adjacent(non-adjacent), therefore $g(b)$ must be
adjacent(non-adjacent) to $g(u)=u$.
\end{proof}

\begin{Lemma}
Let $\mathbb{V}$ be a vector space of dimension $n\ge 4$. Let
$b_l,b_m\in T_1$ be any two basis vectors where $m\ne l$ and $1\le
m,l\le n$. Let $b_l \sim^g b_m$ for some $g\in
\Gamma(G(\mathbb{V}))$.
\begin{enumerate}[label=(\roman*)]
\item Let $u\in T_2$ be such that $S_u=\{b_l, b_m\}$, then $g\in
\Gamma_u$.
\item Let $u\in T_{n-2}$ be such that $S_u=T_1-\{b_l,b_m\}$, then
$g\in \Gamma_u$.
\item Let $u,v\in T_{n-1}$ be such that $S_u=T_1-\{b_l\}$ and
$S_v=T_1-\{b_m\}$, then $u\sim^g v$.
\end{enumerate}
\end{Lemma}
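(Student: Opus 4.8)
The plan is to make the entire statement a corollary of one structural fact: over the field of two elements every automorphism of $G(\mathbb{V})$ is just a permutation of the fixed basis. Concretely, applying the theorem of Das quoted above to the basis $\{b_1,\dots,b_n\}$, any $g\in\Gamma(G(\mathbb{V}))$ satisfies $g(b_i)=c_i\,b_{\sigma(i)}$ for some permutation $\sigma$ in the symmetric group on $n$ letters and nonzero scalars $c_i$; since $q=2$ forces $c_i=1$, we get $g(b_i)=b_{\sigma(i)}$. Writing each vertex as $u=\sum_{b_i\in S_u} b_i$ and using linearity of $g$, this gives the single identity I will rely on, namely $S_{g(u)}=\sigma(S_u)$, where $\sigma(S_u):=\{b_{\sigma(i)}:b_i\in S_u\}$. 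I would also record the trivial but essential remark that over $\mathbb{F}_2$ a vector is recovered from its skeleton, so $S_{g(u)}=S_u$ is equivalent to $g(u)=u$.

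With this in hand I translate the hypothesis $b_l\sim^g b_m$ into $\sigma(l)=m$ and $\sigma(m)=l$; in particular $\sigma$ stabilizes the set $\{b_l,b_m\}$, and therefore also stabilizes its complement $T_1\setminus\{b_l,b_m\}$. Each of the three parts is then a one-line computation. For (i), $S_u=\{b_l,b_m\}$ gives $S_{g(u)}=\sigma(\{b_l,b_m\})=\{b_m,b_l\}=S_u$, hence $g(u)=u$ and $g\in\Gamma_u$. For (ii), $S_u=T_1\setminus\{b_l,b_m\}$ is exactly the stabilized complement, so again $S_{g(u)}=S_u$ and $g\in\Gamma_u$. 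For (iii), $S_u=T_1\setminus\{b_l\}$ gives $S_{g(u)}=\sigma(T_1\setminus\{b_l\})=T_1\setminus\{b_{\sigma(l)}\}=T_1\setminus\{b_m\}=S_v$, whence $g(u)=v$; the symmetric computation using $\sigma(m)=l$ gives $g(v)=u$, so $u\sim^g v$.

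Essentially all of the content is front-loaded into the first paragraph, and I expect the only delicate point to be the passage from Das's theorem to the clean identity $S_{g(u)}=\sigma(S_u)$: one must check that the scalars are forced to equal $1$ and that $g$ therefore sends each fixed basis vector to another fixed basis vector. Once that reduction is secured there is no genuine obstacle, since the three conclusions are immediate set computations.

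If one prefers to stay inside the skeleton lemmas already established rather than invoke $\sigma$, two of the parts can be obtained directly. Part (i) follows from Lemma \ref{LemTransOnlyNghbr}(ii), which gives $b_l,b_m\in S_{g(u)}$, together with the fact that $g$ preserves $T_2$ (Lemmas \ref{Murtaza} and \ref{VertexDeg}), forcing $|S_{g(u)}|=2$ and hence $S_{g(u)}=\{b_l,b_m\}$. Part (iii) follows from the symmetric form of Lemma \ref{LemTransOnlyNghbr}(i) (here $b_m\in S_u$ and $b_l\notin S_u$, so $b_m\notin S_{g(u)}$) together with $|S_{g(u)}|=n-1$ from degree preservation. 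The ``both absent'' configuration of part (ii) is not covered by Lemma \ref{LemTransOnlyNghbr}, but it drops out of the observation that the all-ones vector $w=\sum_{i=1}^n b_i$ is fixed by every automorphism: since $u=w+b_l+b_m$ over $\mathbb{F}_2$, we get $g(u)=g(w)+g(b_l)+g(b_m)=w+b_m+b_l=u$.
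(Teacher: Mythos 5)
Your three set computations are fine, and your fallback arguments for parts (i) and (iii) --- Lemma \ref{LemTransOnlyNghbr} combined with degree preservation (Lemmas \ref{Murtaza} and \ref{VertexDeg}) and the uniqueness of the vertex of $T_2$ (resp. $T_{n-1}$) with the prescribed skeleton --- coincide exactly with the paper's own proof. The genuine problem is the step you lean on everywhere else: ``using linearity of $g$''. An automorphism of $G(\mathbb{V})$ is only an adjacency-preserving bijection of the vertex set; it is not given to be a linear map on $\mathbb{V}$, and Das's theorem as quoted tells you only where a \emph{basis} goes, not where an arbitrary vector goes. (For $q\ge 3$ automorphisms genuinely fail to be linear in general: distinct scalar multiples have equal skeletons, hence are twins, and may be swapped while fixing everything else.) Over $\mathbb{F}_2$ it is true that every automorphism satisfies $S_{g(u)}=\sigma(S_u)$, and hence is linear, but that is exactly the kind of statement that must be \emph{proved} by an adjacency argument; invoking linearity to derive it is circular. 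Since your master identity, your whole first route, and also your fallback for part (ii) (the computation $g(u)=g(w)+g(b_l)+g(b_m)$ assumes $g$ is additive) all rest on this, part (ii) is left without any valid proof.

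The gap is easy to close, and closing it lands you on the paper's method. Since $g$ permutes $T_1$ (by Lemma \ref{VertexDeg}, or by Das's theorem with $c_i=1$), for any vertex $u\notin T_1$ one has: $b_j\in S_{g(u)}$ iff $g(u)$ is adjacent to $b_j=g(b_{\sigma^{-1}(j)})$ iff $u$ is adjacent to $b_{\sigma^{-1}(j)}$ iff $b_{\sigma^{-1}(j)}\in S_u$. This yields $S_{g(u)}=\sigma(S_u)$ with no linearity at all, and it is the global version of what the paper's Lemma \ref{LemTransOnlyNghbr} does pairwise. In particular, for part (ii): from $b_l,b_m\notin S_u$, the vertex $u$ is non-adjacent to both $b_l$ and $b_m$, hence $g(u)$ is non-adjacent to $g(b_l)=b_m$ and to $g(b_m)=b_l$, so $b_l,b_m\notin S_{g(u)}$; degree preservation puts $g(u)\in T_{n-2}$, and $u$ is the unique vertex of $T_{n-2}$ whose skeleton omits both $b_l$ and $b_m$, so $g(u)=u$. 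That corrected argument is precisely the paper's proof of (ii).
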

\begin{proof}
$(i)$ As $b_l,b_m\in S_u$, therefore by Lemma
\ref{LemTransOnlyNghbr}(ii), $b_l,b_m\in S_{g(u)}$. Since, $u\in
T_2$ is the only element of $T_2$  which have both $b_l$ and $b_m$
in its
skeleton, therefor $g(u)=u$.\\
$(ii)$ As $b_l,b_m \not\in S_u$, therefore by Lemma
\ref{LemTransOnlyNghbr}(ii), $b_l,b_m\not\in S_{g(u)}$. Since $u$ is
the only element of $T_{n-2}$, which does not have both $b_l$ and
$b_m$ in its skeleton. Therefore, $g(u)=u$.\\
$(iii)$ As $b_m\in S_u$ and $b_l\not\in S_u$, therefore by Lemma
\ref{LemTransOnlyNghbr}(i), $b_m\not\in S_{g(u)}$ and $b_l\in
S_g(u)$. Since $v$ is the only element of $T_{n-1}$, which have
$b_l$ and does not have $b_m$ in its skeleton, therefore $u\sim^g
v$.
\end{proof}
\section{The Fixing neighborhood of non-zero component graph}
A vertex $x\in S(G)$ is said to {\it fix a pair} $(u,v)\in
S(G)\times S(G)$, if $\mathcal{O}_x(u)\ne \mathcal{O}_x(v)$ in $G$.
For $(u,v)\in S(G)\times S(G)$, the set $fix(u,v)=\{x\in S(G):
\mathcal{O}_x(u)\ne \mathcal{O}_x(v)\}$ is called the {\it fixing
neighborhood} of pair $(u,v)$. For any two distinct vertices $u$ and
$v$ in $G$ with $\mathcal{O}(u)\neq \mathcal{O}(v)$,
$fix(u,v)=\emptyset$.

\begin{Lemma}\label{LemBasFixNbhd}
Let $b_l,b_m\in T_1$ be two distinct basis vectors of a vector
 space $\mathbb{V}$ of dimension $n\ge 3$ over the field of 2 elements and $u\in V(G(\mathbb{V}))$ be such that either $b_l\in
S_u$ or $b_m\in S_u$. If $g\in \Gamma_u(G(\mathbb{V}))$, then
$b_l\not\sim^g b_m$. Moreover, $fix(b_l,b_m)=\{u\in
S(G(\mathbb{V})):$ either $b_l\in S_u$ or $b_m\in S_u \}.$
\end{Lemma}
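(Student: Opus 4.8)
The plan is to read the alternative ``either $b_l\in S_u$ or $b_m\in S_u$'' as saying that \emph{exactly one} of $b_l,b_m$ lies in $S_u$, and to rest both assertions on two ingredients. The first is Lemma~\ref{stabskel}: every $g\in\Gamma_u$ preserves membership in $S_u$. The second is the fact that any permutation of the basis, extended linearly, is a graph automorphism of $G(\mathbb{V})$, since it preserves the skeleton-sharing adjacency; in particular the transposition $\tau$ that interchanges $b_l$ and $b_m$ and fixes every other basis vector belongs to $\Gamma(G(\mathbb{V}))$. (Transitivity of these permutation automorphisms on $T_1$ also gives $|\mathcal{O}(b_l)|=|\mathcal{O}(b_m)|=n\ge 2$, so $b_l,b_m\in S(G(\mathbb{V}))$ and $fix(b_l,b_m)$ is genuinely the fixing neighborhood of a pair.)

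For the first statement, assume without loss of generality $b_l\in S_u$ and $b_m\notin S_u$, and take $g\in\Gamma_u$. Lemma~\ref{stabskel} gives $g(b_l)\in S_u$, while $b_m\notin S_u$, so $g(b_l)\neq b_m$ and hence $b_l\not\sim^g b_m$; the case $b_m\in S_u,\ b_l\notin S_u$ is symmetric.

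For the ``moreover'' I would establish the set equality by deciding, for each $u\in S(G(\mathbb{V}))$, whether $\mathcal{O}_u(b_l)\neq\mathcal{O}_u(b_m)$. If exactly one of $b_l,b_m$ lies in $S_u$, the first statement shows $g(b_l)\neq b_m$ for all $g\in\Gamma_u$, so $b_m\notin\mathcal{O}_u(b_l)$ although $b_m\in\mathcal{O}_u(b_m)$; the orbits differ and $u\in fix(b_l,b_m)$. If instead $b_l,b_m$ are both in $S_u$ or both outside $S_u$, then they carry equal coefficients in $u$ (both $1$ or both $0$), so $\tau(u)=u$, i.e.\ $\tau\in\Gamma_u$; since $b_l\sim^\tau b_m$ we get $\mathcal{O}_u(b_l)=\mathcal{O}_u(b_m)$ and $u\notin fix(b_l,b_m)$. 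Together these two directions give the asserted description of $fix(b_l,b_m)$.

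The main thing to get right is the ``both in $S_u$'' case: intuitively one might expect such a $u$ to fix the pair, but the transposition $\tau$ shows it does not, which forces the exclusive reading of the alternative. The vector $u=\sum_i b_i$, fixed by every automorphism yet having its basis entries freely swappable, is the transparent witness. Beyond this point the argument is just Lemma~\ref{stabskel} plus the elementary observation about $\tau$, so no real computation is involved.
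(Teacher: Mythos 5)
Your proof is correct, and it is worth separating the two halves. For the first assertion you follow essentially the paper's route: the paper assumes (exactly) $b_l\in S_u$, $b_m\notin S_u$, and gets a contradiction from $g(b_l)=b_m$ because $g(u)=u$ would then be adjacent to $b_m$; you package the identical observation as an appeal to Lemma~\ref{stabskel}, which is proved by that very adjacency argument. For the ``moreover'' part, however, you do something the paper does not. The paper deduces the description of $fix(b_l,b_m)$ with the single word ``consequently,'' which only justifies the inclusion $\supseteq$: a vertex whose skeleton contains exactly one of $b_l,b_m$ fixes the pair. The reverse inclusion --- that a vertex $u$ with both or neither of $b_l,b_m$ in $S_u$ does \emph{not} fix the pair --- requires exhibiting an element of $\Gamma_u$ that swaps $b_l$ and $b_m$, and the paper never produces one. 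You do: the linear extension $\tau$ of the transposition of $b_l$ and $b_m$ permutes skeletons and hence preserves adjacency, so $\tau\in\Gamma(G(\mathbb{V}))$; over the field of two elements, equal coefficients (both $1$ or both $0$) give $\tau(u)=u$, so $\tau\in\Gamma_u$ with $b_l\sim^\tau b_m$, and since orbits that meet coincide, $\mathcal{O}_u(b_l)=\mathcal{O}_u(b_m)$. This fills a genuine gap in the paper's proof. Your insistence on the exclusive reading of ``either $b_l\in S_u$ or $b_m\in S_u$'' is also the right call, and your $\tau$ is exactly the witness showing the inclusive reading would make the first assertion false (e.g.\ for $u=b_l+b_m$, or for $\sum_i b_i$). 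The side remark that $b_l,b_m\in S(G(\mathbb{V}))$, so that $(b_l,b_m)$ genuinely lies in the domain where fixing neighborhoods are defined, is a further point of care the paper omits.
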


\begin{proof}
Without loss of generality, assume $b_l\in S_u$ and $b_m\not\in
S_u$, then $u=g(u)$ is adjacent to $b_l$ and non-adjacent $b_m$.
Suppose on contrary $b_l\sim^g b_m$, then $b_m=g(b_l)$ must be
adjacent to $u=g(u)$, a contradiction. Hence $b_l\not\sim^g b_m$ and
consequently, $fix(b_l,b_m)=\{u\in S(G(\mathbb{V})):$ either $b_l\in
S_u$ or $b_m\in S_u \}.$
\end{proof}
\begin{Theorem}\label{fixNghTh}
Let $\mathbb{V}$ be a vector space of dimension $n\ge 3$ over the field
of 2 elements and $G(\mathbb{V})$ be its non-zero component graph.
Let $u,v\in T_i$ for some $i$ $(1\leq i\leq n-1)$, then
$fix(u,v)=\{w\in S(G(\mathbb{V})):|S_w\cap S_u|\neq |S_w\cap
S_v|\}$.
\end{Theorem}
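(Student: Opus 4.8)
The plan is to reformulate membership in $fix(u,v)$ in terms of the group action and then translate everything into the combinatorics of skeletons, exploiting that over the field of two elements every automorphism is the linear extension of a permutation of the basis. First I would record the reformulation: for $w\in S(G(\mathbb{V}))$, the orbits $\mathcal{O}_w(u)$ and $\mathcal{O}_w(v)$ are equal if and only if some $g\in\Gamma_w(G(\mathbb{V}))$ satisfies $g(u)=v$ (orbits of a group action are either equal or disjoint). Hence $w\in fix(u,v)$ is equivalent to the statement that no automorphism fixing $w$ maps $u$ to $v$, and it suffices to prove that such a $g$ exists if and only if $|S_w\cap S_u|=|S_w\cap S_v|$.

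Next I would establish the skeleton-transformation principle. The basis vectors are exactly the vertices of $T_1$, and by Lemma~\ref{VertexDeg} every automorphism preserves each class $T_i$, so any $g\in\Gamma(G(\mathbb{V}))$ restricts to a permutation $\bar g$ of $T_1$. Using that a basis vector $b_j$ is adjacent to a vertex $x$ precisely when $b_j\in S_x$, the automorphism property gives $S_{g(x)}=\bar g(S_x)$ for every vertex $x$; in particular $g\in\Gamma_w$ forces $\bar g(S_w)=S_w$, so $\bar g$ stabilizes $S_w$ and its complement in $T_1$ setwise. Conversely, by Das's characterization of the automorphisms quoted in Section~1 (over the field of two elements the scalars $c_i$ are all $1$), every permutation of the basis extends linearly to an automorphism of $G(\mathbb{V})$, since it preserves the relation ``$S_x\cap S_y\neq\emptyset$''.

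The necessity direction is then immediate. If $g\in\Gamma_w$ satisfies $g(u)=v$, then $\bar g$ is a bijection with $\bar g(S_w)=S_w$ and $\bar g(S_u)=S_{g(u)}=S_v$, so it maps $S_w\cap S_u$ bijectively onto $\bar g(S_w)\cap\bar g(S_u)=S_w\cap S_v$, forcing $|S_w\cap S_u|=|S_w\cap S_v|$. Contrapositively, $|S_w\cap S_u|\neq|S_w\cap S_v|$ already places $w$ in $fix(u,v)$.

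The main work is the sufficiency direction, where I must build the automorphism. Assume $|S_w\cap S_u|=|S_w\cap S_v|$ and write $W,U,V\subseteq\{1,\dots,n\}$ for the index sets of $S_w,S_u,S_v$, noting $|U|=|V|=i$. I would partition $\{1,\dots,n\}$ into the eight cells cut out by $W,U,V$ and observe that the hypothesis $|W\cap U|=|W\cap V|$ yields $|W\cap U\cap\overline V|=|W\cap\overline U\cap V|$, while combining the hypothesis with $|U|=|V|$ yields the analogous equality $|\overline W\cap U\cap\overline V|=|\overline W\cap\overline U\cap V|$ outside $W$. These two cardinality equalities let me define a permutation $\sigma$ of $\{1,\dots,n\}$ that fixes the cells $U\cap V$ and $\overline U\cap\overline V$ pointwise and swaps $U\cap\overline V$ with $\overline U\cap V$ both inside $W$ and outside $W$; by construction $\sigma(W)=W$ and $\sigma(U)=V$. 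Its linear extension $g$ then lies in $\Gamma_w$ and satisfies $g(u)=v$, so $\mathcal{O}_w(u)=\mathcal{O}_w(v)$ and $w\notin fix(u,v)$. The one point to check carefully, and the place where the hypothesis is used in full, is that these two equalities are exactly what guarantee that the prescribed swaps restrict to a bijection on $W$ and on its complement, so that $\sigma$ is a genuine permutation.
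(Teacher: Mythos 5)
Your proposal is correct, and it is both a different and a strictly more complete argument than the one in the paper. The paper's proof only establishes one containment: assuming $|S_w\cap S_u|\neq|S_w\cap S_v|$, it rules out $u\sim^g v$ for $g\in\Gamma_w$ via a two-case contradiction argument (the cases $S_w\cap S_u=\emptyset$ and both intersections nonempty), each case leaning on Lemma~\ref{skelotondiff}(ii) and Lemma~\ref{stabskel}; it never proves the reverse containment $fix(u,v)\subseteq\{w:|S_w\cap S_u|\neq|S_w\cap S_v|\}$, i.e.\ it never exhibits an automorphism fixing $w$ and sending $u$ to $v$ when the intersection sizes agree. You prove both directions. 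Your necessity direction replaces the paper's case analysis with a single clean observation: any $g\in\Gamma(G(\mathbb{V}))$ preserves $T_1$ (by the degree argument of Lemma~\ref{VertexDeg}) and satisfies $S_{g(x)}=\bar g(S_x)$, so $g\in\Gamma_w$ with $g(u)=v$ maps $S_w\cap S_u$ bijectively onto $S_w\cap S_v$ --- no cases needed. Your sufficiency direction is the genuinely new content: the eight-cell partition, the two cardinality identities $|W\cap U\cap\overline V|=|W\cap\overline U\cap V|$ and $|\overline W\cap U\cap\overline V|=|\overline W\cap\overline U\cap V|$ (the second using $|U|=|V|$), and the permutation $\sigma$ with $\sigma(W)=W$, $\sigma(U)=V$ whose linear extension over the two-element field is a graph automorphism fixing $w$ (here it matters that over $GF(2)$ a vertex is determined by its skeleton). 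One small attribution point: the fact that every basis permutation extends to a graph automorphism is not what Das's quoted theorem says (his theorem is the converse, describing what automorphisms do to bases); but you supply the correct self-contained justification yourself, namely that the extension preserves the relation $S_x\cap S_y\neq\emptyset$, so nothing is missing. In short, the paper's argument buys a quick verification of one inclusion from previously proved lemmas, while yours buys the full set equality claimed in the statement.
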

\begin{proof}
Let $w\in S(G(\mathbb{V}))$ be such that $|S_w\cap S_u|\neq |S_w\cap
S_v|$, then
there are two possible cases:\\
Case 1: Either $S_w\cap S_u=\emptyset$ or $S_w\cap S_v=\emptyset$. Without
loss of generality, consider $S_w\cap S_u=\emptyset$. Let $g\in
\Gamma_w$. We claim that $\mathcal{O}_w(u)\ne \mathcal{O}_w(v)$
i.e., $u\not\sim^gv$. Suppose on contrary $u\sim^gv$. Since $S_w\cap
S_v\neq\emptyset$, therefore let $b\in S_w\cap S_v$. Then $b\in S_v-S_u$.
By Lemma \ref{skelotondiff}$(ii)$, $g(b)\in S_u-S_v$. Also $g(b)\in
S_w$ by Lemma \ref{stabskel}. Hence $g(b)\in
S_u\cap S_w$, a contradiction that $S_w\cap S_u=\emptyset$.\\
Case 2: Both $S_w\cap S_u\neq \emptyset$ and $S_w\cap S_v\neq \emptyset$.
Since $|S_u|=|S_v|$ and $|S_w\cap S_u|\neq |S_w\cap S_v|$, therefore
$|S_u\cup S_w|\neq |S_v\cup S_w|$ and hence $|S_v-\{S_u\cup
S_w\}|\neq |S_u-\{S_v\cup S_w\}|$. Moreover, $S_v-\{S_u\cup S_w\}$
and $S_u-\{S_v\cup S_w\}$ are disjoint sets of basis vectors and at
least one of them is non-empty. Without loss of generality assume
$S_v-\{S_u\cup S_w\}$ is non-empty and let $b\in S_v-\{S_u\cup
S_w\}$. For $g\in \Gamma_w$, we claim that $\mathcal{O}_w(u)\ne
\mathcal{O}_w(v)$ i.e., $u\not\sim^gv$. Suppose on contrary
$u\sim^gv $. Since $b\in S_v$ and $b\notin S_u\cup S_w$, therefore
$b\in S_v-S_u$ and $b\notin S_w$. By Lemma \ref{skelotondiff}$(ii)$,
$g(b)\in S_u-S_v$ and by Lemma \ref{stabskel} $g(b)\notin S_w$. Thus
$g(b)\in S_u-\{S_v\cup S_w\}$. Thus, if $b\in S_v-\{S_u\cup S_w\}$,
then $g(b)\in S_u-\{S_v\cup S_w\}$, which is a contradiction as
$S_v-\{S_u\cup S_w\}$ and $S_u-\{S_v\cup S_w\}$ are disjoint sets
with $|S_v-\{S_u\cup S_w\}|\neq |S_u-\{S_v\cup S_w\}|$ and $g$ is a
bijective function.
\end{proof}
\begin{Lemma}\label{FNP}
Let $G(\mathbb{V})$ be the non-zero component graph of a vector space
$\mathbb{V}$ of dimension $n\ge 3$ over the field of 2 elements. Let
$u,v\in T_i$ for some $i$ $(1\leq i\leq n-1)$, then we have:
\begin{enumerate}[label=(\roman*)]
  \item $fix(u,v)\cap \{S_u\cap S_v \}=\emptyset$
  \item $fix(u,v)\cap T_1= \{S_u\cup S_v\}-\{S_u \cap S_v\}$.
\end{enumerate}
\end{Lemma}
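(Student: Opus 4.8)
The plan is to derive both parts directly from the characterization of the fixing neighborhood furnished by Theorem \ref{fixNghTh}, namely $fix(u,v)=\{w\in S(G(\mathbb{V})):|S_w\cap S_u|\neq|S_w\cap S_v|\}$, by exploiting the special structure of the skeleton of a basis vector. The key observation is that every element appearing in $S_u$, $S_v$, or $T_1$ is itself a basis vector $b_k$, and such a vertex satisfies $S_{b_k}=\{b_k\}$. Consequently $S_{b_k}\cap S_u$ is either $\{b_k\}$ (when $b_k\in S_u$) or $\emptyset$, so that $|S_{b_k}\cap S_u|\in\{0,1\}$ and equals $1$ precisely when $b_k\in S_u$; the same holds with $v$ in place of $u$. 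I would also record at the outset that, since $n\ge 3$ gives $1\le 1\le n-1$, Remark stated above yields $T_1\subseteq S(G(\mathbb{V}))$, so every basis vector is an eligible candidate for membership in a fixing neighborhood, and the set of basis vectors $S_u\cap S_v$ may be harmlessly identified with the corresponding set of vertices of $G(\mathbb{V})$.

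For part $(i)$, I would take any $w\in S_u\cap S_v$, say $w=b_k$. Then $b_k\in S_u$ and $b_k\in S_v$, so by the observation $|S_{b_k}\cap S_u|=1=|S_{b_k}\cap S_v|$. By Theorem \ref{fixNghTh} this forces $b_k\notin fix(u,v)$, and since $b_k$ was arbitrary we conclude $fix(u,v)\cap\{S_u\cap S_v\}=\emptyset$. For part $(ii)$, I would compute $fix(u,v)\cap T_1$ by testing each basis vector $b_k\in T_1$: by the observation $|S_{b_k}\cap S_u|\neq|S_{b_k}\cap S_v|$ holds exactly when precisely one of the conditions $b_k\in S_u$, $b_k\in S_v$ is satisfied, i.e.\ when $b_k$ lies in the symmetric difference $(S_u\cup S_v)-(S_u\cap S_v)$. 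Combining this with Theorem \ref{fixNghTh} gives $fix(u,v)\cap T_1=\{S_u\cup S_v\}-\{S_u\cap S_v\}$.

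Since both statements reduce to a one-line cardinality check once the identity $S_{b_k}=\{b_k\}$ is in hand, I do not anticipate any substantial obstacle; the lemma is essentially a corollary of Theorem \ref{fixNghTh}. The only points needing care are the bookkeeping that basis vectors genuinely belong to $S(G(\mathbb{V}))$ (guaranteed by $n\ge 3$), so that they may legitimately appear in a fixing neighborhood, and the observation that for a basis vector the value $|S_{b_k}\cap S_u|$ collapses to the indicator of the event $b_k\in S_u$, which is exactly what converts the cardinality condition of Theorem \ref{fixNghTh} into a membership/symmetric-difference condition.
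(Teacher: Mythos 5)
Your proposal is correct and follows essentially the same route as the paper: both derive the lemma from Theorem \ref{fixNghTh} together with the observation that a basis vector $b$ has singleton skeleton $S_b=\{b\}$, so $|S_b\cap S_u|$ is just the indicator of $b\in S_u$. Your write-up is in fact slightly cleaner, since it avoids the paper's unnecessary appeals to $|S_u|=|S_v|$ and its redundant ``both inclusions'' phrasing in part (i), and it explicitly notes that $T_1\subseteq S(G(\mathbb{V}))$ when $n\ge 3$.
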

\begin{proof}
$(i)$ Let $b\in \{S_u\cap S_v\}$, then $S_b=\{b\}\subset \{S_u\cap
S_v\}$. Since $|S_u|=|S_v|$, therefore $|S_b\cap S_u|=|S_b\cap
S_v|$. Hence by Theorem \ref{fixNghTh}, $b\notin fix(u,v)$. Thus
$\{S_u\cap S_v\} \not\subset
fix(u,v) $.\\
Conversely, let $b\in fix(u,v)$ is a basis vector. Since
$|S_u|=|S_v|$ and by Theorem \ref{fixNghTh}, $|S_b\cap S_u|\neq
|S_b\cap S_v|$ implies that $\{b\}=S_b\not\subset \{S_u\cap S_v\}$,
thus $b\notin \{S_u\cap S_v\} $. Therefore, $fix(u,v)\not\subset
\{S_u\cap S_v\}$. Hence
$fix(u,v)\cap \{S_u\cap S_v \}=\emptyset$.\\
$(ii)$ Let $b\in fix(u,v)\cap T_1$, then by Theorem \ref{fixNghTh},
$|S_b\cap S_u|\neq |S_b\cap S_v|$, also $|S_u|=|S_v|$, which implies
that at least one of $S_b\cap S_u$ and $S_b\cap S_v$ is non-empty.
Also by $(i)$, $b\notin S_u\cap S_v$. Thus either $b\in S_u-S_v$ or
$b\in S_v-S_u$. Hence, $b\in \{S_u\cup S_v\}-\{S_u \cap S_v\} $.\\
Conversely, let $b\in \{S_u\cup S_v\}-\{S_u \cap S_v\} $. Then
either $S_b\subset S_u$ or $S_b\subset S_v$. In both cases $|S_b\cap
S_u|\neq |S_b\cap S_v|$, as $|S_u|=|S_v|$. Hence by Theorem
\ref{fixNghTh}, $b\in fix(u,v)$. Since $b$ is basis vector,
therefore $b\in fix(u,v)\cap T_1$.
\end{proof}

\begin{Theorem}\label{fixnbhtranslation}
Let $G(\mathbb{V})$ be the non-zero component graph of a vector space
$\mathbb{V}$ of dimension $n\ge 3$ over the field of 2 elements. Let
$u,v\in T_i$ for some $i$ $(1\leq i\leq n-1)$, then
$$fix(u,v)=fix(u-\sum\limits_{b\in S_u\cap S_v}b, v-\sum\limits_{b\in S_u\cap S_v}b).$$
\end{Theorem}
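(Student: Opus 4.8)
The plan is to reduce the statement to the skeleton characterization of the fixing neighborhood already established in Theorem~\ref{fixNghTh}. Write $c=\sum_{b\in S_u\cap S_v}b$ and set $u'=u-c$, $v'=v-c$. The first step is to identify the skeletons of $u'$ and $v'$. Since the field has two elements, $u$ is exactly the sum of the basis vectors in $S_u$, so subtracting the common part $c$ deletes precisely the shared basis vectors; this gives $S_{u'}=S_u-S_v$ and, symmetrically, $S_{v'}=S_v-S_u$.

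The next step is to confirm that the pair $(u',v')$ again satisfies the hypotheses of Theorem~\ref{fixNghTh}, i.e. that $u'$ and $v'$ lie in a common class $T_j$. Because $|S_{u'}|=|S_u|-|S_u\cap S_v|$ and $|S_{v'}|=|S_v|-|S_u\cap S_v|$, and $|S_u|=|S_v|$ (both lie in $T_i$), we obtain $|S_{u'}|=|S_{v'}|=:j$. Moreover, $u\ne v$ forces $S_u\ne S_v$, so the disjoint sets $S_u-S_v$ and $S_v-S_u$ are both nonempty, whence $1\le j\le i\le n-1$ and $u',v'$ are genuine vertices of the same class $T_j$. Thus Theorem~\ref{fixNghTh} is applicable to $(u',v')$ as well.

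The core of the argument is then a one-line counting identity. For any $w\in S(G(\mathbb{V}))$, decompose $S_u$ as the disjoint union $(S_u-S_v)\cup(S_u\cap S_v)$ and intersect with $S_w$ to get $|S_w\cap S_u|=|S_w\cap S_{u'}|+|S_w\cap(S_u\cap S_v)|$; the same decomposition for $v$ yields $|S_w\cap S_v|=|S_w\cap S_{v'}|+|S_w\cap(S_u\cap S_v)|$. Subtracting these, the common term $|S_w\cap(S_u\cap S_v)|$ cancels, so $|S_w\cap S_u|\ne|S_w\cap S_v|$ holds if and only if $|S_w\cap S_{u'}|\ne|S_w\cap S_{v'}|$. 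Applying Theorem~\ref{fixNghTh} to both pairs, this equivalence says exactly that $w\in fix(u,v)$ if and only if $w\in fix(u',v')$, and since $w$ was arbitrary, the two fixing neighborhoods coincide.

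I do not anticipate a genuine obstacle. Once the translation $u\mapsto u-c$ is read off at the level of skeletons via $\mathbb{F}_2$-arithmetic, the claim becomes the elementary observation that deleting the same basis vectors from both $S_u$ and $S_v$ shifts each intersection count by the identical amount $|S_w\cap(S_u\cap S_v)|$, and hence preserves their (in)equality. The only point requiring a little care is the bookkeeping that keeps $(u',v')$ inside the hypotheses of Theorem~\ref{fixNghTh} --- namely verifying $u',v'\ne 0$ and $u',v'\in T_j$ --- while the degenerate case $u=v$, where both sides are empty, can be dismissed at the outset.
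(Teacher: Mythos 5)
Your proof is correct, and it follows the same overall strategy as the paper --- both reduce the claim to Theorem~\ref{fixNghTh} applied to the pair $(u,v)$ and to the translated pair $(u',v')$, and both show that removing the common skeleton $S_u\cap S_v$ does not affect the defining condition. However, the key step is executed differently, and your version is actually the sounder one. The paper argues through a chain of biconditionals at the level of \emph{sets}: it passes from $|S_u\cap S_w|\neq|S_v\cap S_w|$ to $S_u\cap S_w\neq S_v\cap S_w$, manipulates the set inequality by deleting $S_u\cap S_v$ from both sides (invoking Lemma~\ref{FNP}(i) along the way), and then converts back to a cardinality inequality at the end. Two links in that chain --- $|A|\neq|B|\Leftrightarrow A\neq B$ at the start and its mirror image at the finish --- are not genuine equivalences, since distinct sets can have equal cardinality; the chain is therefore broken in one direction at each end, though the two defects happen to point in opposite directions. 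Your argument stays entirely at the level of cardinalities: the disjoint decompositions $|S_w\cap S_u|=|S_w\cap S_{u'}|+|S_w\cap(S_u\cap S_v)|$ and $|S_w\cap S_v|=|S_w\cap S_{v'}|+|S_w\cap(S_u\cap S_v)|$ make the common term cancel, giving the equivalence $|S_w\cap S_u|\neq|S_w\cap S_v|\Leftrightarrow|S_w\cap S_{u'}|\neq|S_w\cap S_{v'}|$ with no detour through set equality and no need for Lemma~\ref{FNP}(i). You also verify explicitly that $u',v'$ are nonzero vertices lying in a common class $T_j$ (using that $u\neq v$ forces $S_u\neq S_v$ over the two-element field), a hypothesis of Theorem~\ref{fixNghTh} that the paper leaves implicit. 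In short: same skeleton-translation idea, but your cancellation argument repairs the paper's flawed biconditional chain.
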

\begin{proof}
\begin{flushleft}
Let $w\in fix(u,v)$, then by Theorem \ref{fixNghTh}, $|S_u\cap
S_w|\neq |S_v\cap S_w| \Leftrightarrow S_u\cap S_w \neq S_v\cap S_w$
$\Leftrightarrow \{S_u\cap S_w\} -\{S_u\cap S_v\} \neq \{S_v\cap
S_w\}-\{S_u\cap S_v\}$
$\Leftrightarrow \{S_u-(S_u\cap S_v)\}\cap \{S_w -(S_u\cap S_v)\} \neq \{S_v-(S_u\cap S_v)\}\cap \{S_w -(S_u\cap S_v)\}$.\\
Since $fix(u,v)\cap \{S_u\cap S_v \}=\emptyset$ by Lemma \ref{FNP}$(i)$. Therefore,\\
$\Leftrightarrow \{S_u-(S_u\cap S_v)\}\cap S_w \neq \{S_v-(S_u\cap
S_v)\}\cap S_w$\\
$\Leftrightarrow |\{S_u-(S_u\cap S_v)\}\cap S_w| \neq
|\{S_v-(S_u\cap S_v)\}\cap S_w|$. Hence by Theorem \ref{fixNghTh},
$\Leftrightarrow w\in fix(u-\sum\limits_{b\in S_u\cap S_v}b,
v-\sum\limits_{b\in S_u\cap S_v}b).$

\end{flushleft}

\end{proof}
\begin{Theorem}\label{fixTi}
Let $G(\mathbb{V})$ be the non-zero component graph of a finite
dimensional vector space $\mathbb{V}$ of dimension $n\geq 3$ over
the field of 2 elements. Let $u,v\in T_{i'}$ for some $i'$, $(1\leq
i'\leq n-1)$, such that $S_u\cap S_v=\emptyset$. Then

    \[|fix(u,v)\cap T_i|={n\choose i}-\sum\limits_{\substack{0\, \leq \, j \, \leq \, i'\\0 \, \leq \, k \, \leq \, n-2i'\\i \, = \, 2j \, + \, k}}{i'\choose j}^2 {n-2i'\choose k}-{n-2i'\choose
    i},\]
where $j,k$ are two numbers such that $i=2j+k$.
\end{Theorem}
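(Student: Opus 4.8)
The plan is to translate the set-theoretic characterization of $fix(u,v)$ supplied by Theorem \ref{fixNghTh} into a pure subset-counting problem and then enumerate by intersection pattern. Writing $A=S_u$ and $B=S_v$, the hypothesis $S_u\cap S_v=\emptyset$ makes $A,B$ disjoint with $|A|=|B|=i'$; set $C=\{b_1,\dots,b_n\}\setminus(A\cup B)$, so that $|C|=n-2i'$. Since $S(G(\mathbb{V}))=\bigcup_{s=1}^{n-1}T_s$ (as observed after Lemma \ref{VertexDeg}), every $w\in T_i$ with $1\le i\le n-1$ automatically lies in $S(G(\mathbb{V}))$, so the membership condition in the definition of $fix$ imposes nothing extra on $T_i$. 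Hence by Theorem \ref{fixNghTh}, $w\in fix(u,v)\cap T_i$ if and only if $|S_w\cap A|\ne|S_w\cap B|$, and it is cleaner to count the complement.

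First I would count the vertices $w$ with $|S_w|=i$ and $|S_w\cap A|=|S_w\cap B|$. Such a skeleton is determined by its three disjoint pieces $S_w\cap A$, $S_w\cap B$, $S_w\cap C$ of sizes $j,j,k$ with $2j+k=i$, $0\le j\le i'$ and $0\le k\le n-2i'$. The number of ways to assemble these pieces is $\binom{i'}{j}\binom{i'}{j}\binom{n-2i'}{k}=\binom{i'}{j}^2\binom{n-2i'}{k}$, so summing over all admissible $(j,k)$ counts exactly the vertices of $T_i$ that fail to fix the pair $(u,v)$.

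Next I would isolate the degenerate case $j=0$, in which $S_w\subseteq C$ and both intersection sizes vanish. These $w$ contribute $\binom{n-2i'}{i}$ and are precisely the skeletons avoiding $A\cup B$; this is consistent with the fact that any basis permutation fixing such a $w$ may still interchange $A$ and $B$ (since $A,B\subseteq\overline{S_w}$ and $|A|=|B|$), so $w$ does not separate $u$ from $v$. Peeling this term off and subtracting the full complement count from $|T_i|=\binom{n}{i}$ produces the claimed identity $|fix(u,v)\cap T_i|=\binom{n}{i}-\sum\binom{i'}{j}^2\binom{n-2i'}{k}-\binom{n-2i'}{i}$.

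The binomial arithmetic is routine; the step I would treat most carefully is matching the index set of the summation to the genuine constraints $2j+k=i$, $0\le j\le i'$, $0\le k\le n-2i'$ (so that out-of-range binomials vanish correctly at the boundaries, e.g.\ when $i>n-2i'$ the $j=0$ term disappears and $\binom{n-2i'}{i}=0$), and being explicit about whether the displayed sum runs over $j\ge1$ or $j\ge0$, so that the separate contribution $\binom{n-2i'}{i}$ of the all-in-$C$ skeletons is accounted for exactly once. I do not expect a deeper obstacle, since the entire argument rests only on Theorem \ref{fixNghTh} together with the disjointness hypothesis $S_u\cap S_v=\emptyset$.
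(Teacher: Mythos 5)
Your proposal is correct and follows essentially the same route as the paper's proof: both apply Theorem \ref{fixNghTh} to reduce the problem to counting skeletons $S_w$ with $|S_w\cap S_u|=|S_w\cap S_v|$, decompose such a skeleton into pieces of sizes $j,j,k$ inside $S_u$, $S_v$, and the remaining basis vectors with $2j+k=i$, and subtract the resulting count from $|T_i|=\binom{n}{i}$. Your closing caution about the summation index is well placed: to avoid double-counting the skeletons contained in the complement of $S_u\cup S_v$, the displayed sum must be read as running over $j\ge 1$, which is exactly how the paper's proof proceeds (it treats $j=0$ separately, contributing the term $\binom{n-2i'}{i}$, even though the displayed index says $0\le j\le i'$).
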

\begin{proof}
First we count the vertices in $T_i-fix(u,v)$. Let $w\in
T_i-fix(u,v)$ for some $i$, $(1\leq i\leq n-1)$, then by Theorem
\ref{fixNghTh}, $|S_w\cap S_u |=|S_w\cap S_v |$. Since
$|S_u|=|S_v|=i'$, therefore $|S_w\cap S_u |=|S_w\cap S_v |=j$, where
$(0\leq j\leq i')$. If $j=0$, then $S_w\cap S_u=S_w\cap S_v=\emptyset$,
and $S_w$ contains $i$ basis vectors of $T_1-\{S_u\cup S_v\}$. Since
$i$ elements out of $n-2i'$ elements of $T_1-\{S_u\cup S_v \}$ can
be chosen in ${n-2i'\choose i}$ ways, therefore there are
$n-2i'\choose i$ elements $w$ in each $T_i$, $(1\leq i\leq n-2i')$
such that $S_w\cap S_u=S_w\cap S_v=\emptyset$ and hence these elements
belong to $T_i-fix(u,v)$. If $1\leq j\leq i'$, then the set $S_w$ of
cardinality $i$ contains $j$ vertices from each of two disjoint sets
$S_u$ and $S_v$ of cardinality $i'$ if and only if $i=2j+k$, where
$k=|S_w-\{S_u\cup S_v \}|$. As $j$ vertices can be chosen out of
$i'$ vertices of $S_u$ in $i'\choose j$  ways, $j$ vertices can be
chosen out $i'$ vertices of $S_v$ in ${i'\choose j}$ ways and the
remaining $k$ vertices of $S_w$ can be chosen out of $n-2i'$
vertices of $T_1-\{S_u\cup S_v \}$  in ${n-2i'\choose k}$ ways.
Thus, by the fundamental principle of counting, there are
${i'\choose j}^2 {n-2i'\choose k}$ vertices $w$ in each $T_i$ for
each possibility of the numbers $j$ and $k$, such that $|S_w\cap S_u
|=|S_w\cap S_v |=j$. Therefore, the number of vertices in each $T_i$
such that $|S_w\cap S_u |=|S_w\cap S_v |\neq 0$ is
$\sum\limits_{\substack{0\,\leq\, j\,\leq\, i'\\0\,\leq\, k\,\leq\,
n-2i'\\i\,=\,2j\,+\,k}}{i'\choose j}^2 {n-2i'\choose k}$. Hence, the proof
follows by the fact that the cardinality of $T_i$ for each $i$,
$(1\leq i\leq n-1)$ is ${n\choose i}$.
\end{proof}
\begin{Corollary}
Let $G(\mathbb{V})$ be the non-zero component graph of a finite
dimensional vector space $\mathbb{V}$ of dimension $n\geq 3$ over
the field of 2 elements. Let $u,v\in T_{i'}$ for some $i'$, $(2\leq
i'\leq n-1)$, such that $S_u\cap S_v\neq \emptyset$. If $r=i'-|S_u\cap
S_v|$, then

    $$|fix(u,v)\cap T_i|={n\choose i}-\sum\limits_{\substack{0\,\leq\, j\,\leq\, r\\0\,\leq\, k\,\leq\, n-2r\\i\,=\,2j\,+\,k}}{r\choose j}^2 {n-2r\choose k}-{n-2r\choose
    i},$$
where $j,k$ are two numbers such that $i=2j+k$.
\end{Corollary}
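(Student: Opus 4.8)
The plan is to reduce this corollary directly to the already-established Theorem~\ref{fixTi} (the disjoint-skeleton case) via the translation identity of Theorem~\ref{fixnbhtranslation}. Since we work over the field of two elements, every vertex is the sum of exactly the basis vectors in its skeleton, so a vertex is determined by its skeleton. Set $u' = u - \sum_{b\in S_u\cap S_v} b$ and $v' = v - \sum_{b\in S_u\cap S_v} b$. Then $S_{u'} = S_u - (S_u\cap S_v) = S_u - S_v$ and $S_{v'} = S_v - S_u$, so these two skeletons are disjoint and each has cardinality $i' - |S_u\cap S_v| = r$. Hence $u', v' \in T_r$ with $S_{u'}\cap S_{v'} = \emptyset$.

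The first step I would record is that the hypotheses force $r \ge 1$: since $u$ and $v$ are distinct vertices of $T_{i'}$ they have distinct skeletons, so $|S_u\cap S_v| \le i'-1$, giving $r\ge 1$; and clearly $r \le i'-1 \le n-2$, so $1\le r\le n-1$. In particular $u'$ and $v'$ are genuine (nonzero) vertices lying in $T_r$, and the dimension constraint $n\ge 3$ needed to invoke the earlier results is met.

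The second step is the application of the translation theorem. By Theorem~\ref{fixnbhtranslation} we have $fix(u,v) = fix(u', v')$, and therefore $fix(u,v)\cap T_i = fix(u', v')\cap T_i$ for every $i$. The final step is to apply Theorem~\ref{fixTi} to the pair $(u', v')$, which satisfies exactly its hypotheses (two vertices of a common class $T_r$ with disjoint skeletons). Substituting $r$ in place of the parameter $i'$ in that theorem yields
$$|fix(u', v')\cap T_i| = {n\choose i} - \sum_{\substack{0\le j\le r\\ 0\le k\le n-2r\\ i = 2j+k}} {r\choose j}^2 {n-2r\choose k} - {n-2r\choose i},$$
and combining this with the equality of the two fixing neighborhoods gives the claimed formula.

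I do not expect a genuine obstacle here: the content of the corollary is precisely that removing the common skeleton does not change the fixing neighborhood, which is exactly what Theorem~\ref{fixnbhtranslation} asserts, so the whole argument is a reduction rather than a fresh count. The only point requiring a line of care is the bookkeeping that $u', v'$ remain nonzero and land in the class $T_r$ with $1\le r\le n-1$, so that Theorem~\ref{fixTi} is legitimately applicable with $r$ substituted for $i'$; once that is checked, the conclusion is immediate.
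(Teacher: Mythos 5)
Your proposal is correct and takes essentially the same route as the paper's own proof: subtract the common part of the skeletons, use Theorem~\ref{fixnbhtranslation} to identify $fix(u,v)$ with $fix(u',v')$, and then apply the disjoint-skeleton count of Theorem~\ref{fixTi} with $r$ in place of $i'$. The only addition is your explicit check that $r\geq 1$ and that $u',v'$ are nonzero vertices of $T_r$, a bookkeeping point the paper leaves implicit.
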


\begin{proof}
Since the skeletons of $u-\sum_{b\in S_u\cap S_v}b$ and
$v-\sum_{b\in S_u\cap S_v}b$ are disjoint and
$u-\sum_{b\in S_u\cap S_v}b,v-\sum_{b\in S_u\cap
S_v}b\in T_r$. Therefore by Theorem \ref{fixTi},
$|fix(u-\sum_{b\in S_u\cap S_v}b,v-\sum_{b\in S_u\cap
S_v}b)\cap T_i|={n\choose i}-\sum\limits_{\substack{0\,\leq \,j\,\leq\,
r\\0\,\leq k\,\leq\, n-2r\\i\,=\,2j\,+\,k}}{r\choose j}^2 {n-2r\choose
k}-{n-2r\choose i}$. Also by Theorem \ref{fixnbhtranslation}, $|fix(u,v)\cap
    T_i|=|fix(u-\sum_{b\in S_u\cap S_v}b,v-\sum_{b\in S_u\cap
S_v}b)\cap T_i|$. Hence proved.

\end{proof}

\section{The fixed number of graph}
In this section, we give a realizable result about the existence of a
graph in terms of the difference of fixed number and fixing number of
the graph. We find an upper bound on the size of fixing graph of a graph $G$.
We also find the fixed number of non-zero component graph.

The \emph{open neighbourhood} of a vertex $u$ in a graph $G$ is $N_G(u)=\{v\in V(G): v$ is adjacent to $u$ in $G\}$. The \emph{fixed number} of a graph $G$, $fxd(G)$, is the minimum number $k$ such that every subset of vertices of $G$ of cardinality $k$ is a fixing set of $G$. Note that $0\leq fix(G) \leq fxd(G) \leq |V(G)|-1$.

In the next theorem, we will prove the existence of a graph $G$ for
a given positive integer $N$ such that $fxd(G)-fix(G)\geq N$.
\begin{Theorem}
For every positive integer $N$, there exists a graph $G$ such that
$fxd(G)-fix(G)\geq N$.
\end{Theorem}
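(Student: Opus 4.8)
The plan is to reduce the fixed number to a count of fixed points of automorphisms, and then to exhibit a graph that carries exactly one non-trivial automorphism, one which fixes almost all of its vertices while leaving the fixing number equal to $1$.

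First I would record a reformulation of $fxd$. A set $D\subseteq V(G)$ fails to be a fixing set precisely when some non-identity automorphism fixes every vertex of $D$, that is, when $D\subseteq \mathrm{Fix}(g):=\{x\in V(G):g(x)=x\}$ for some $g\neq id$. Writing $M=\max_{g\neq id}|\mathrm{Fix}(g)|$, the set $\mathrm{Fix}(g^{*})$ attaining this maximum is a non-fixing set of cardinality $M$, so not every $M$-subset is a fixing set; on the other hand, a set of cardinality $M+1$ cannot sit inside any $\mathrm{Fix}(g)$, since every such set has at most $M$ elements, and is therefore a fixing set. Hence $fxd(G)=M+1$. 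This is the engine of the argument: to make $fxd(G)$ large it suffices to produce a single non-trivial automorphism with many fixed vertices while keeping $fix(G)$ small.

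Next I would build the graph. Fix a rigid graph $R$ (so $\Gamma(R)=\{id\}$) of minimum degree at least $2$ and of order $|V(R)|\geq N$; such graphs exist for all sufficiently large orders, for instance because almost every graph is asymmetric and has large minimum degree, so one may take $|V(R)|=\max\{N,6\}$ or larger. Choose any vertex $r\in V(R)$ and form $G$ by adjoining two new vertices $u,v$, each joined only to $r$. The heart of the proof is to verify that $\Gamma(G)=\{id,(u\,v)\}$. Because $R$ has minimum degree at least $2$, the vertices $u$ and $v$ are the only degree-$1$ vertices of $G$, so every automorphism $\phi$ must permute $\{u,v\}$; it therefore maps the unique neighbour $r$ of $u$ to the unique neighbour of $\phi(u)\in\{u,v\}$, which is again $r$, so $\phi(r)=r$. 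Consequently $\phi$ maps $V(R)=V(G)\setminus\{u,v\}$ onto itself and, since the only edges added were $ru$ and $rv$, restricts to an automorphism of $R$, which is the identity by rigidity. Thus $\phi$ is the identity on $V(R)$ and either fixes or swaps $u$ and $v$, proving the claim.

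Finally I would read off the two parameters. The transposition $\tau=(u\,v)$ fixes every vertex except $u$ and $v$, so $|\mathrm{Fix}(\tau)|=|V(R)|$, and by the reformulation above $fxd(G)=|V(R)|+1$. On the other hand $\{u\}$ is a fixing set, since $\tau$ is the only non-identity automorphism and it moves $u$; as $G$ is not rigid, $fix(G)=1$. Therefore $fxd(G)-fix(G)=|V(R)|\geq N$. The main obstacle is the middle step, namely certifying that no unintended automorphisms appear; the minimum-degree hypothesis on $R$ is exactly what forbids an automorphism from sending $u$ or $v$ into $R$, and rigidity forbids any nontrivial rearrangement inside $R$, after which everything else is routine bookkeeping.
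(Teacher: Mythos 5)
Your proof is correct, but it takes a genuinely different route from the paper. The paper builds an explicit bipartite graph on $2^k+k-3$ vertices (one side indexed by binary representations of $1,\dots,2^k-2$, the other by digit sums), then computes both parameters directly: $fix(G)=2^k-(k+1)$ and $fxd(G)=2^k+k-4$, so the gap $2k-3$ grows only logarithmically in the order of the graph. You instead minimize the fixing number outright: a rigid graph $R$ with $\delta(R)\geq 2$ plus two pendant twins at one vertex has automorphism group exactly $\{id,(u\,v)\}$, giving $fix(G)=1$ and $fxd(G)=|V(R)|+1$, a gap of $|V(G)|-2$, which is the largest possible for a non-rigid graph (since $fix(G)\geq 1$ and $fxd(G)\leq |V(G)|-1$). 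Your reformulation $fxd(G)=1+\max_{g\neq id}|\mathrm{Fix}(g)|$ is a rederivation of the paper's Remark \ref{fixdrem} (cited there from \cite{ijav}); alternatively, since $u$ and $v$ are twins in your graph ($N(u)\setminus\{v\}=N(v)\setminus\{u\}=\{r\}$), you could have invoked Theorem \ref{fxd} directly to get $fxd(G)=|V(G)|-1$. The one point stated without proof is the existence of arbitrarily large rigid graphs of minimum degree at least $2$; this is a standard fact (almost every graph is asymmetric with large minimum degree), but in a self-contained write-up you should either cite it precisely or give an explicit family. Overall your argument is shorter, yields a stronger quantitative separation, and avoids the paper's counting of neighborhoods $N(w_i)$, at the cost of leaning on that existence result rather than an explicit construction.
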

\begin{proof}
We choose $k\geq max\{3,\frac{N+3}{2}\}$. Let $G$ be a
bipartite graph and $V(G)=U\cup W$, where $U=\{u_1,...,u_{2^k-2}\}$ and ordered set
$W=\{w_1,w_2,...,w_{k-1}\}$ and both $U$ and $W$ are disjoint.
Before defining adjacency, we assign coordinates to each vertex of
$U$ by expressing each integer $j$ $(1\leq j \leq 2^k-2)$ in its
base 2 (binary) representation. We assign each $u_j$ $(1\leq j \leq
2^k-2)$ the coordinates $(a_{k-1},a_{k-2},...,a_0)$ where $a_m$
$(0\leq m \leq k-1)$ is the value in the $2^m$ position of binary
representation of $j$. For integers $i$ $(1\leq i \leq k-1)$ and $j$
$(1\leq j \leq 2^k-2)$, we join $w_i$ and
$u_j(a_{k-1},a_{k-2},...,a_0)$ if and only if $i=\sum_{m=0}^{k-1}
a_m$. This completes the construction of graph $G$.\par Next we will
prove that $fix(G)=2^k-(k+1)$. We denote $N(w_i)=\{u_j\in U: u_j$ is
adjacent to $w_i, (1\leq j \leq 2^{k-2})\}$ and it is obvious to see
that $N(w_i)\cap N(w_j)=\emptyset$ as if
$u(a_{k-1},a_{k-2},...,a_0)\in N(w_i)\cap N(w_j)$, then
$i=\sum_{m=0}^{k-1} a_m=j$. Number of vertices in each $N(w_i)$ is
the permutation of $k$ digits in which digit 1 is appears $i$ times
and digit 0 appears $(k-i)$ times, hence $|N(w_i)|={k\choose i}$. As
$N(w_i)\cap N(w_j)=\emptyset$, so a fixing set $D$ with the minimum cardinality
must have
${k\choose i}-1$ vertices from each $N(w_i)$ $(1\leq i\leq k-1)$,
for otherwise if $u,v\in N(w_i)$ and $u,v\not\in D$ for some $i$,
then there exists an automorphism $g\in \Gamma(G)$ such that
$g(u)=v$ because $u$ and $v$ have only one common neighbor $w_i$,
which is a contradiction that $D$ is a fixing set. Moreover
$D\subseteq U$ as each $w_i$ $(0\leq i \leq k-1)$, is fixed while
fixing at least ${k\choose i}-1$ vertices in each $N(w_i)$. Hence,
\[fix(G)=\sum_{i=1}^{k-1} {k \choose i}-(k-1)=2^k-(k+1)\]\par
Next we will find $fxd(G)$. As order of $G$ is $2^k+k-3$ and set of
all vertices of $G$ except one vertex forms a fixing set of $G$. It
can be seen that $fxd(G)=2^k+k-4$, for otherwise if $fxd(G)<2^k+k-4$
and $u,v \in N(w_i)$ for some $i$, then the set $E=W\cup U\setminus
\{u,v\}$ consisting of $2^k+k-5$ is not a fixing set, which implies
that $fxd(G)=2^k+k-4$. Hence for the graph $G$, we have
$fxd(G)-fix(G)=2k-3\geq N$ as required.
\end{proof}

Javaid \emph{et al.} \cite{ijav} defined the fixing graph of a graph $G$. The authors gave a method for finding fixing number of a graph with the help of its fixing
graph. The authors found a lower bound on the size of fixing graph of a graph for which $fxd(G)=fix(G)$. We give an upper bound on the size of fixing graph of such a graph. The {\it fixing graph}, $F(G)$, of a graph $G$ is a bipartite graph
with bipartition $(S(G),$ $V_s(G))$ and a vertex $x\in S(G)$ is
adjacent to a pair $(u,v)\in V_s(G)$ if $x\in fix(u,v)$. For a set
$D\subseteq S(G)$, $N_{F(G)}(D)=\{(u,v)\in V_s(G):$ $x\in fix(u,v)$
for some $x\in D\}$. In the fixing graph, $F(G)$, the minimum
cardinality of a subset $D$ of $V(G)$ such that $N_{F(G)}(D)=V_s(G)$
is the fixing number of $G$.
\begin{Theorem}\label{t1}
Let $G$ be a graph of order $n\geq 2$ such that $fix(G)=fxd(G)=k$, then
\center
$|E(F(G))|\leq n({n \choose 2}-k+1)$.\\
\end{Theorem}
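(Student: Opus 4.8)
The plan is to bound $|E(F(G))|$ by controlling the degree in $F(G)$ of each vertex on the $S(G)$ side. Writing $|E(F(G))|=\sum_{x\in S(G)}\deg_{F(G)}(x)$, it suffices to prove the uniform estimate $\deg_{F(G)}(x)\le\binom{n}{2}-k+1$ for every $x\in S(G)$, since then $|E(F(G))|\le|S(G)|\bigl(\binom{n}{2}-k+1\bigr)\le n\bigl(\binom{n}{2}-k+1\bigr)$ because $|S(G)|\le n$. As a preliminary remark one can note that $fix(G)=fxd(G)=k$ forces $S(G)=V(G)$: if some $w\notin S(G)$, then replacing one vertex of a minimum fixing set (which may be chosen inside $S(G)$) by $w$ gives a set of size $k$ that is not fixing, contradicting $fxd(G)=k$. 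This is convenient, but in fact only $fix(G)=k$ is used below.

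Next I would translate $\deg_{F(G)}(x)$ into orbit data of the stabilizer $\Gamma_x$. By definition $\deg_{F(G)}(x)$ counts the pairs of $V_s(G)$ with $x\in fix(u,v)$, i.e. with $\mathcal{O}_x(u)\ne\mathcal{O}_x(v)$. Such a pair fails this condition exactly when $u$ and $v$ lie in a common $\Gamma_x$-orbit; conversely any two distinct vertices in a common $\Gamma_x$-orbit lie in a common $\Gamma(G)$-orbit of size $\ge 2$ and hence already form a pair of $V_s(G)$. Thus, if the orbits of $\Gamma_x$ acting on $V(G)$ have sizes $m_1,\dots,m_r$, the number of pairs of $V_s(G)$ not fixed by $x$ equals $\sum_{j}\binom{m_j}{2}$, so $\deg_{F(G)}(x)=|V_s(G)|-\sum_{j}\binom{m_j}{2}$. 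Using the trivial bound $|V_s(G)|\le\binom{n}{2}$ (there are only $\binom{n}{2}$ pairs of distinct vertices), the target estimate reduces to the single inequality $\sum_{j}\binom{m_j}{2}\ge k-1$.

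The heart of the argument is this last inequality, which I would obtain through the notion of a base (fixing set) for the $\Gamma_x$-action. Fixing all but one vertex in each $\Gamma_x$-orbit yields a set $D'$ of size $\sum_{j}(m_j-1)$ whose pointwise stabilizer inside $\Gamma_x$ is trivial (a stabilizing automorphism cannot move the one remaining vertex of each orbit), so $\{x\}\cup D'$ is a fixing set of $G$; since $fix(G)=k$, every fixing set has at least $k$ vertices, giving $1+\sum_{j}(m_j-1)\ge k$, that is $\sum_{j}(m_j-1)\ge k-1$. Combining this with $\binom{m}{2}\ge m-1$ for every $m\ge 1$ yields $\sum_{j}\binom{m_j}{2}\ge\sum_{j}(m_j-1)\ge k-1$, which is exactly what was needed; summing the resulting bound $\deg_{F(G)}(x)\le\binom{n}{2}-(k-1)$ over $x\in S(G)$ proves the theorem. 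The only genuine obstacle is this orbit inequality, whose content is precisely the passage from the global invariant $fix(G)$ to a local count on the orbits of a single stabilizer; everything else is bookkeeping. The inequality is sharp, as one checks for $K_3$, where each stabilizer has orbit sizes $1,2$ and $k=2$, so equality holds throughout.
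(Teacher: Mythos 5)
Your proof is correct, and while it shares the paper's overall skeleton --- bound $\deg_{F(G)}(x)$ for each $x\in S(G)$ by $\binom{n}{2}-k+1$ and sum --- the way you establish that degree bound is genuinely different. The paper argues by contradiction: if $\deg_{F(G)}(v)\geq |V_s(G)|-k+2$, then at most $k-2$ pairs $(u_i,v_i)$ are missed by $v$; since each $u_i$ fixes its own pair, the set $\{v,u_1,\dots,u_t\}$ with $t\leq k-2$ satisfies $N_{F(G)}(\{v,u_1,\dots,u_t\})=V_s(G)$, and then it invokes the covering characterization of the fixing number via the fixing graph (from the cited reference \cite{ijav}) to conclude $fix(G)\leq t+1\leq k-1$, a contradiction. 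You instead work directly with the stabilizer action: the exact identity $\deg_{F(G)}(x)=|V_s(G)|-\sum_j\binom{m_j}{2}$ in terms of the $\Gamma_x$-orbit sizes $m_j$, followed by the base construction (fix all but one vertex per orbit, together with $x$, to get a fixing set of size $1+\sum_j(m_j-1)$) and the elementary inequality $\binom{m}{2}\geq m-1$. Your route is more self-contained --- it never needs the covering characterization lemma --- and the exact orbit formula plus the sharpness check on $K_3$ are genuine added value; the paper's route is shorter given that the fixing-graph machinery is already available from \cite{ijav}. Two small remarks: your preliminary observation that $fix(G)=fxd(G)=k$ forces $S(G)=V(G)$ fails in the degenerate case $k=0$ (a rigid graph has $S(G)=\emptyset$), but you correctly flag it as unused; and the final step $|S(G)|\bigl(\binom{n}{2}-k+1\bigr)\leq n\bigl(\binom{n}{2}-k+1\bigr)$ tacitly uses $\binom{n}{2}-k+1\geq 0$, which holds since $k\leq n-1\leq\binom{n}{2}+1$, a point the paper also leaves implicit.
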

\begin{proof}
Let $|S(G)|=r$ and $|V_s(G)|=s$, then $r\leq n$ and $s\leq {r\choose
2}\leq {n\choose 2}$. Let $v\in S(G)$. We will prove that
$deg_{F(G)}(v)\leq s-k+1$. Suppose $deg_{F(G)}(v)\geq s-k+2$, then
there are at most $k-2$ pairs in $V_s(G)$ which are not adjacent to
$v$. Let $V_s(G)\backslash
N_{F(G)}(v)=\{(u_{1},v_{1}),(u_{2},v_{2}),...,(u_{t},v_{t})\}$,
where $t\leq k-2$. Note that, $u_{i}$ fixes $(u_{i},v_{i})$ for each
$i$ with $1\leq i \leq t$. Hence, $u_{i}$ is adjacent to pair
$(u_{i},v_{i})$ in $F(G)$ for each $i$, $1\leq i \leq t$ and so
$N_{F(G)}(\{v,u_{1},u_{2},...,u_{t}\})=V_s(G)$. Hence, $fix(G) \leq
t+1 \leq k-1 $, which is a contradiction. Thus, $deg_{F(G)}(v) \leq
s-k+1\leq {n \choose 2}-k+1$, and consequently, $|E(F(G))| \leq n({n
\choose 2}-k+1)$.
\end{proof}
%
%

A set of vertices $A\subset V(G)$ is referred as \emph{non-fixing} set if
$\Gamma_A(G)\setminus \{id\}\neq \emptyset$. The following remark is useful in
finding the fixed number of non-zero component graph of a graph $G$.
\begin{Remark}\label{fixdrem}\cite{ijav}
Let $G$ be a graph of order $n$. If $r$, $(1\leq r\leq n-2)$ is the
largest cardinality of a non-fixing subset of $G$, then
$fxd(G)=r+1$.
\end{Remark}

The following result gives the fixed number of non-zero component
graph.
\begin{Theorem}
Let $G(\mathbb{V})$ be the non-zero component graph of a finite dimensional vector space $\mathbb{V}$ of dimension $n\geq 3$ over the field of 2 elements,
then $fxd(G(\mathbb{V}))=2^{n-1}$.
\end{Theorem}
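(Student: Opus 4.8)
The plan is to invoke Remark \ref{fixdrem}, which reduces the computation of $fxd(G(\mathbb{V}))$ to finding $r$, the largest cardinality of a non-fixing subset, since then $fxd(G(\mathbb{V}))=r+1$. A subset $A\subseteq V(G(\mathbb{V}))$ is non-fixing precisely when it is contained in the fixed-vertex set $\mathrm{Fix}(g)=\{w: g(w)=w\}$ of some non-identity automorphism $g$; and conversely each such $\mathrm{Fix}(g)$ is itself a non-fixing set. Hence $r=\max_{g\neq id}|\mathrm{Fix}(g)|$, and the whole problem becomes: over all non-trivial automorphisms, maximize the number of fixed vertices.

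First I would pin down the automorphism group. By the theorem of Das quoted above, any automorphism sends a basis $\{b_1,\dots,b_n\}$ to $\{c_i b_{\sigma(i)}\}$ for some $\sigma$ in the symmetric group on $n$ elements and non-zero scalars $c_i$; over the field of $2$ elements the only non-zero scalar is $1$, so every automorphism is induced by a permutation $\sigma$ of the basis, acting on a vertex $w$ by sending its skeleton $S_w$ to $\sigma(S_w)$. Since over this field each non-zero vector is determined by its non-empty skeleton, the vertices correspond bijectively to the non-empty subsets of $\{b_1,\dots,b_n\}$, and the induced automorphism $g_\sigma$ fixes $w$ if and only if $S_w$ is $\sigma$-invariant (as a set).

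Next I would count fixed vertices. A subset $S$ is $\sigma$-invariant exactly when it is a union of cycles of $\sigma$; so if $\sigma$ has $c(\sigma)$ cycles (counting fixed points as $1$-cycles), there are $2^{c(\sigma)}$ invariant subsets, of which $2^{c(\sigma)}-1$ are non-empty. Thus $|\mathrm{Fix}(g_\sigma)|=2^{c(\sigma)}-1$, and maximizing the fixed-vertex count over $\sigma\neq id$ reduces to maximizing $c(\sigma)$ over non-identity permutations. This is the combinatorial crux: a non-identity $\sigma$ moves $k\geq 2$ points, which split into cycles of length at least $2$, giving $c(\sigma)\leq (n-k)+k/2=n-k/2\leq n-1$, with equality exactly for a transposition. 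Hence $\max_{\sigma\neq id}c(\sigma)=n-1$, so $r=2^{n-1}-1$ and therefore $fxd(G(\mathbb{V}))=r+1=2^{n-1}$.

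I expect the only genuine subtlety to be the clean reduction of the automorphism group to plain basis-permutations (so that scalars play no role) together with the cycle-count bound that singles out transpositions; the remaining steps are bookkeeping. To close the argument I would also verify that $r=2^{n-1}-1$ falls within the admissible range $1\leq r\leq |V(G(\mathbb{V}))|-2=2^{n}-3$ required by Remark \ref{fixdrem}, which holds for every $n\geq 3$.
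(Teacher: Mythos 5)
Your proposal is correct, but it takes a genuinely different route from the paper. The paper works locally: it exhibits the explicit set $A=\{x:\ b_l,b_m\in S_x \text{ or } b_l,b_m\notin S_x\}$, uses its Theorem \ref{fixNghTh} on fixing neighborhoods to argue $A$ is non-fixing, shows that adjoining any vertex of the complement produces a fixing set, and computes $|A|=2^{n-1}-1$ by binomial sums. You instead work globally: you determine the whole automorphism group (permutations of the basis acting on skeletons, since scalars are trivial over $\mathbb{F}_2$), observe that a set is non-fixing exactly when it lies inside $\mathrm{Fix}(g)$ for some $g\neq id$, so that $r=\max_{g\neq id}|\mathrm{Fix}(g)|=\max_{\sigma\neq id}\bigl(2^{c(\sigma)}-1\bigr)$, and then settle the maximization by the cycle bound $c(\sigma)\leq n-1$ with equality only for transpositions (whose fixed-vertex set is precisely the paper's $A$). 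Your approach buys something real: it rigorously establishes that $A$ has the \emph{largest cardinality} among all non-fixing sets, whereas the paper only verifies that $A$ is maximal under inclusion (no one-vertex extension is non-fixing), which does not by itself rule out a larger non-fixing set not containing $A$; your argument closes that gap. Conversely, your proof leans on a stronger structural input than the quoted Theorem of Das: that theorem only constrains the image of a basis, so you should add one line deriving $S_{g(w)}=\sigma(S_w)$ for every vertex $w$ (namely, $w\sim b_i$ iff $b_i\in S_w$, and $g$ preserves adjacency to the basis, so the skeleton of $g(w)$ is $\sigma(S_w)$; over the field of two elements the skeleton determines the vertex). This is routine and is essentially the content of the paper's own skeleton lemmas, but as written it is asserted rather than proved.
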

\begin{proof}
We choose a pair of basis vectors $b_l,b_m\in T_1$ and define a set
$A=\{x\in V(G(\mathbb{V})):$ either $b_l,b_m\in S_x$ or
$b_l,b_m\notin S_x\}$. Take $B=V(G(\mathbb{V}))-A=\{y\in V(G(\mathbb{V})):$ either $b_l\in S_y,b_m\notin S_y$ or $b_l\notin S_y,b_m\in S_y\}$. We choose two vertices $u,v\in B \cap T_i$
for some $i$ $(1\leq i\leq n)$, such that $S_u-S_v=\{b_l\}$ and
$S_v-S_u=\{b_m\}$. This implies $\{S_u-S_v\}\cup
\{b_m\}=\{S_v-S_u\}\cup \{b_l\}$. We prove that
$A$ is a non-fixing set of $G(\mathbb{V})$ by proving that there exist a non-trivial $g\in \Gamma_A$ such that $u\sim^g v$. Now let $w\in A$ be an arbitrary vertex of set $A$. We discuss two cases of $w$.\\
\textbf{Case 1}: If $b_l,b_m\in S_w$, then $b_l\in S_w\cap S_u$ and
$b_m\in S_w\cap S_v$. Also $\{ S_w\cap S_u\}\cup \{b_m\}=\{ S_w\cap
S_v\}\cup \{b_l\}$ implies $|\{ S_w\cap S_u\}\cup \{b_m\}|=|\{
S_w\cap S_v\}\cup \{b_l\}|$. This implies $|S_w\cap S_u|=|S_w\cap
S_v|$. Therefore by Theorem \ref{fixNghTh}, $w\notin fix(u,v)$ and
$u\sim^gv$ for some $g\in \Gamma_w$.\\
\textbf{Case 2}: If $b_l,b_m\notin S_w$, then $S_w\cap S_u= S_w\cap
S_v$ implies $|S_w\cap S_u|=|S_w\cap S_v|$. Therefore by Theorem
\ref{fixNghTh}, $w\notin fix(u,v)$ and $u\sim^gv$ for some $g\in \Gamma_w$.\\
Since the choice of $w$ is arbitrary in $A$. Therefore,
$\Gamma_A-\{id\}\neq \emptyset$. Hence, $A$ is a non-fixing set of
$G(\mathbb{V})$.
\par Next we prove that any superset of set $A$ is a fixing set of $G(\mathbb{V})$. Let $z\in B$ be an arbitrary vertex and $A'=A\cup \{z\}$.
Without loss of generality assume $b_l\in S_z$ and $b_m\notin S_z$.
We choose $u,v\in B\setminus\{z\}$ such that $S_u-S_v=\{b_l\}$ and
$S_v-S_u=\{b_m\}$, then $b_l\in S_z\cap S_u$ and $b_m\notin S_z\cap
S_v$. This implies $|S_z\cap S_u|\neq |S_z\cap S_v|$. Therefore by
Theorem \ref{fixNghTh}, $z\in fix (u,v)$. In particular $z\in
fix(b_l,b_m)$, thus $b_l\nsim^gb_m$ where $g\in \Gamma_{A'}$. Hence
$\Gamma_{A'}$ is trivial and $A'$ is a fixing set of
$G(\mathbb{V})$. Thus $A$ is
non-fixing set of $G(\mathbb{V})$ with the largest cardinality.
\par Since $T_i$ for each $i$ $(2\leq i\leq n)$, contains $n-2\choose
{i-2}$ elements which have both $b_l$ and $b_m$ in their skeletons.
Also $T_i$ for each $i$ $(1\leq i\leq n-2)$, contains $n-2\choose i
$ elements which do not have both $b_l$ and $b_m$ in their
skeletons. Therefore, $|A|=\sum\limits_{i=1}^{n-2}{n-2\choose
{i}}+\sum\limits_{i=2}^n {n-2 \choose {i-2}}= 2^{n-1}-1+2^{n-2}=
2^{n-1}-1$. Hence by Remark \ref{fixdrem},
$fxd(G(\mathbb{V}))=2^{n-1}$.

\end{proof}

\begin{Theorem}\label{fxd}\cite{ijav}
Let $G$ be a connected graph of order $n$. Then $fxd(G)=n-1$ if and
only if $N(v)\setminus \{u\}=N(u)\setminus \{v\}$ for some $u,v\in
V(G)$.
\end{Theorem}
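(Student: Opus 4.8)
The plan is to reduce the statement to a question about a single transposition being an automorphism, and to invoke Remark \ref{fixdrem}. First I would note that a bijection of $V(G)$ fixing $n-1$ of the $n$ vertices must fix the last one too, so no set of size $n-1$ can be non-fixing; hence the largest cardinality $r$ of a non-fixing subset satisfies $r\le n-2$. Combined with the standing bound $fxd(G)\le |V(G)|-1$, Remark \ref{fixdrem} then gives that $fxd(G)=n-1$ if and only if $G$ admits a non-fixing set $A$ with $|A|=n-2$ (the maximal possible size), i.e. $r=n-2$.

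Next I would pin down the structure of such a set. Writing $\{u,v\}=V(G)\setminus A$, a non-fixing set $A$ of size $n-2$ means there is a non-trivial $g\in\Gamma_A(G)$. Since $g$ fixes every vertex of $A$ and is a bijection, it must map the two-element set $\{u,v\}$ onto itself; being non-trivial while fixing all of $A$, it is forced to interchange $u$ and $v$. Thus $g$ is exactly the transposition $\tau$ that swaps $u$ and $v$ and fixes every other vertex, and the existence of a non-fixing set of size $n-2$ is equivalent to the existence of a pair $u,v$ for which $\tau$ is a graph automorphism.

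The remaining step is to characterize when $\tau$ is an automorphism by checking edge-preservation case by case: every edge lying inside $A$ is fixed, the pair $uv$ is sent to $vu$ (so $uv$ is an edge iff its image is), and an incidence of the form $uw$ with $w\in A$ is sent to $vw$. Hence $\tau$ preserves adjacency precisely when $uw\in E(G)\Leftrightarrow vw\in E(G)$ for every $w\notin\{u,v\}$, which is exactly the condition $N(v)\setminus\{u\}=N(u)\setminus\{v\}$. Chaining the three equivalences — and using $fxd(G)\le |V(G)|-1$ for the direction that manufactures the automorphism from the neighborhood condition — yields the claimed characterization.

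The argument is short, so there is no deep obstacle; the points requiring care are purely bookkeeping. I would make sure that a non-trivial automorphism fixing $n-2$ vertices is genuinely forced to be a single transposition, and that conversely such a transposition produces a non-fixing set of the \emph{maximal} size $n-2$, so that Remark \ref{fixdrem} applies with $r=n-2$ rather than some smaller value. I would also verify that the connectivity hypothesis is only needed to place the statement in the intended setting (and to dispose of trivially small orders), since the equivalence itself rests solely on the transposition analysis.
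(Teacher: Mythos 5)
Your proof is correct, but there is nothing in the paper to compare it against: Theorem \ref{fxd} is stated as a quoted result from \cite{ijav}, and the paper supplies no proof of it (it is used only to derive the $q\geq 3$ corollary about twin vertices). Taken on its own merits, your argument is sound and self-contained: since a bijection of $V(G)$ fixing $n-1$ vertices must fix the last one, every $(n-1)$-set is fixing, so $fxd(G)=n-1$ is equivalent to the existence of a non-fixing set of size $n-2$; a non-trivial automorphism fixing all of such a set is forced to be the transposition $\tau$ of the two remaining vertices $u,v$; and $\tau$ preserves adjacency exactly when $N(u)\setminus\{v\}=N(v)\setminus\{u\}$, i.e.\ when $u,v$ are twins. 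This is in fact the standard twin-vertex characterization, and it matches how the present paper applies the theorem. Two small points deserve care: Remark \ref{fixdrem} as stated requires $1\leq r\leq n-2$, hence $n\geq 3$, so for $n=2$ you should argue directly from the definition of $fxd$ (your step 1 plus the definition already suffices, since for $K_2$ the empty set is non-fixing); and, as you observe, connectivity is never actually used in the equivalence, which is consistent with it being merely an ambient hypothesis inherited from \cite{ijav}.
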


Let $\mathbb{V}$ be a vector space of dimension $n$ over the field
of $q\geq 3$ elements and $G(\mathbb{V})$ be its non-zero component
graph. Then $T_i$, for each $i$ $(1\leq i\leq n)$, has $n\choose i$
sets of vertices each of cardinality $(q-1)^i$ whose skeletons are
same. Hence their neighborhoods are also same. Thus each of these
$n\choose i$ sets of vertices form twin classes of vertices. Thus
from Theorem \ref{fxd}, we have the following result:

\begin{Theorem}
Let $G(\mathbb{V})$ be a non-zero component graph of a vector space
$\mathbb{V}$ of dimension $n$ over the field of $q\geq 3$ elements,
then $fxd(G(\mathbb{V}))=|G(\mathbb{V})|-1$.
\end{Theorem}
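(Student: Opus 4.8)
The plan is to invoke Theorem \ref{fxd}, which states that a connected graph $G$ of order $n$ satisfies $fxd(G)=n-1$ exactly when it contains a pair of twin vertices, i.e.\ vertices $u,v$ with $N(u)\setminus\{v\}=N(v)\setminus\{u\}$. So it suffices to exhibit one such pair in $G(\mathbb{V})$ under the hypothesis $q\geq 3$, together with the (easy) observation that $G(\mathbb{V})$ is connected. Connectivity is immediate, since the vector $\sum_{i=1}^n b_i$ has full skeleton and hence shares a basis vector with every other vertex, so it is adjacent to all of them.

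First I would record the basic fact that adjacency in $G(\mathbb{V})$ is governed entirely by skeletons: two vertices $x,y$ are adjacent if and only if $S_x\cap S_y\neq\emptyset$, since by definition an edge means they share a basis vector with nonzero coefficient. Consequently, if $S_x=S_y$ then for every third vertex $w$ one has $w\in N(x)\Leftrightarrow S_w\cap S_x\neq\emptyset\Leftrightarrow S_w\cap S_y\neq\emptyset\Leftrightarrow w\in N(y)$. Thus any two vertices sharing a skeleton have identical neighborhoods apart from possibly one another; this is precisely the assertion, made in the paragraph preceding the statement, that the $(q-1)^i$ vectors with a fixed skeleton of size $i$ form a twin class.

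Next I would use $q\geq 3$ to produce two distinct vertices with a common skeleton. Fix a basis vector $b_l$ and consider its nonzero scalar multiples $cb_l$, where $c$ ranges over the $q-1$ nonzero field elements; each such vector has skeleton $\{b_l\}$, and since $q-1\geq 2$ there are at least two of them. Picking two distinct ones, say $u=c_1b_l$ and $v=c_2b_l$ with $c_1\neq c_2$, gives $S_u=S_v=\{b_l\}$. Because $S_u\cap S_v=\{b_l\}\neq\emptyset$, the vertices $u$ and $v$ are adjacent, so $v\in N(u)$ and $u\in N(v)$; combined with the previous paragraph this yields $N(u)\setminus\{v\}=N(v)\setminus\{u\}$, which is exactly the twin condition. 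Applying Theorem \ref{fxd} then gives $fxd(G(\mathbb{V}))=|G(\mathbb{V})|-1$.

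There is little genuine obstacle here: the entire argument reduces to the remark that $q\geq 3$ forces even a singleton skeleton to carry at least two vertices. The one point deserving care is the adjacent case of the twin condition, where one must verify that deleting $v$ from $N(u)$ and $u$ from $N(v)$ aligns the two neighborhoods exactly rather than leaving a residual discrepancy; this follows at once from $S_u=S_v$ together with the fact that $u,v$ are mutually adjacent. I would also note that the same reasoning fails for $q=2$, where every skeleton class is a singleton, consistent with the earlier computation $fxd(G(\mathbb{V}))=2^{n-1}$ in that case.
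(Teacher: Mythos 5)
Your proposal is correct and follows the same route as the paper: both exhibit twin vertices arising from the fact that for $q\geq 3$ each skeleton class contains $(q-1)^i\geq 2$ vertices with identical neighborhoods, and then invoke Theorem \ref{fxd}. Your write-up is in fact more careful than the paper's one-line proof, since you also verify connectivity and check the adjacent-twin case explicitly.
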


\begin{proof}
  Since $G(\mathbb{V})$ has twin vertices, therefor result follows by Theorem \ref{fxd}.
\end{proof}

\end{document}